\setlist[itemize]{leftmargin=2em, itemsep=0.4em, topsep=0.6em, label=\scalebox{0.6}{\ensuremath{\blacksquare}}}
\newcommand{\unit}{1\!\!1}
\newcommand{\mbz}{\mathbb{Z}}
\newcommand{\mbr}{\mathbb{R}}
\newcommand{\mbn}{\mathbb{N}}
\newcommand{\mbg}{\mathbb{G}}
\newcommand{\mcd}{\mathcal{D}}
\newcommand{\mca}{\mathcal{A}}
\newcommand{\mcg}{\mathcal{G}}
\newcommand{\scs}{\mathscr{S}}
\newcommand{\scg}{\mathscr{G}}
\newcommand{\La}{\langle }
\newcommand{\Ra}{\rangle }
\newcommand{\mfs}{\mathfrak{S}}
\newcommand{\bfg}{\mathbf{\widetilde{G}}}
\definecolor{spurple}{RGB}{148, 0, 211}
\definecolor{sblue}{RGB}{0, 102, 255}
\definecolor{steal}{RGB}{0, 170, 170}
\definecolor{smagenta}{RGB}{170, 0, 102}
\definecolor{sred}{RGB}{204, 0, 34}
\definecolor{dspurple}{RGB}{85, 0, 128}
\definecolor{dsred}{RGB}{128, 0, 21}
\definecolor{dsblue}{RGB}{0, 51, 128}
\definecolor{dsteal}{HTML}{02484D}
\newcommand{\ceil}[1]{\left\lceil #1 \right\rceil}
\newcommand{\floor}[1]{\left\lfloor #1 \right\rfloor}
\theoremstyle{plain}
\newtheorem{theorem}{Theorem}
\newtheorem{lemma}{Lemma}
\newtheorem{proposition}{Proposition}
\theoremstyle{definition}
\newtheorem{definition}{Definition}
\theoremstyle{remark}
\newtheorem{remark}{Remark}
\newtheorem{example}{Example}
\titleformat{\section}[block]{\filcenter\bfseries}{\thesection}{1em}{}
\titleformat{\subsection}[runin]{\normalfont\bfseries}{\thesubsection.}{0.5em}{}[.]
\renewenvironment{abstract}
  {\begin{center}%
     \begin{minipage}{0.8\textwidth} 
     \small                         
     \noindent\textbf{Abstract:}    
  }
  {\end{minipage}\end{center}}
\title{The height function of a sparse collection: \\
a Bellman function approach}
\author{
  Shivam Aggarwal, $\:$
  Samuel Hernandez, $\:$
  Irina Holmes Fay, $\:$ and $\:$
  Jennifer Mackenzie
}
\date{} 
\begin{document}

\maketitle

\begin{abstract}
Sparse operators have emerged as a powerful method to extract sharp constants in harmonic analysis inequalities, for example in the context of  bounding singular integral operators. We investigate the level sets of height functions for sparse collections, or, in other words, weak-type (1,1) inequalities for sparse operators applied to constant functions.
We use another notable method from dyadic harmonic analysis,  also famous for its ability to produce sharp constants, the Bellman function method. Specifically, we find the exact Bellman function maximizing level sets of $\mca_\alpha \unit$, where $\mca_\alpha$ is the (localized) sparse operator associated with a binary Carleson sequence.
\end{abstract}


\noindent 
This work began as an undergraduate research project in Spring 2024.  
While the problem under consideration has been chosen for its clarity rather than complexity, this simplicity is a feature: it allows us to present a streamlined exposition of the Bellman function method that is accessible to students encountering it for the first time, while still capturing the essence of the technique.

\section{Introduction}


\noindent Harmonic analysis studies how complicated signals or functions can be understood in terms of simpler, ``building-block'' functions. The structure and properties of such building-blocks depend on the context: what sort of signals or functions are we interested in? What about them are we measuring? What special properties would make their analysis simpler, computations clearer, and problems more tractable? For example, classical Fourier analysis famously uses combinations of sine and cosine functions of various frequencies as its fundamental ``building blocks.''

\vspace{0.1in}
This paper is focused on dyadic harmonic analysis, where building-blocks involve 
 indicator functions 
    $
    \unit_J(t),
    $
 where $J$ is a \textit{dyadic interval} -- from the Greek word $\delta\acute{\upsilon}\alpha\varsigma$ (\textit{dyas}), meaning ``pair'' or ``two,'' reflecting the repeated halving of intervals.
 Dyadic methods have profoundly influenced modern analysis, answering deep questions by translating difficult continuous problems into manageable combinatorial or discrete ones. A celebrated example is Stefanie Petermichl's groundbreaking work \cite{Petermichl2000}, where she recast the classical Hilbert transform -- a fundamental operator extensively studied in analysis -- in terms of simpler dyadic shift operators, revealing entirely new insights, and dramatically simplifying its analysis in weighted settings.

\vspace{0.1in}

Dyadic operators, roughly speaking, map a function $f(t)$ to another function, defined as a sum over dyadic intervals $J$ of various terms involving $\La f\Ra_J \unit_J(t)$, where for any interval $J$ with Lebesgue measure (length) $|J|$,
    $$
    \La f\Ra_J := \frac{1}{|J|}\int_J\: f(t)\,dt
    $$
denotes the average of $f$ over $J$. 
As we detail in Section \ref{S:2}, \textit{sparse operators} involve summing only over a very special \textit{subcollection} of dyadic intervals, namely a \textit{sparse collection}. These are frequently thought of as ``the next best thing'' to a \textit{pairwise disjoint} collection: while overlap is allowed, we have uniform control over the amount of overlap. 

\vspace{0.1in}

We investigate the maximal possible size of the ``sparse generations'' making up such a collection (see Sections \ref{Ss:SparseGeneration} and \ref{Ss:MainQuestion}), a problem which can also be framed as a question about the level sets of sparse operators (see Section \ref{Ss:LevelSets}).
To answer this question, we appeal to the ``Bellman function'' of the problem. Originating in control theory, this method was brought to harmonic analysis in the works  \cite{NTV1999,NTV2001,Volberg2001BellmanApproach}, and has since established itself as a powerful and beautiful method. We refer the reader to \cite{VVbook} for a comprehensive resource on this topic. 

 In Section \ref{S:Bellman}, we explain how to form the Bellman function of the problem, and extract its main properties. In Section \ref{S:Supersolutions}, we focus in on two of these properties: the Main Inequality and the Obstacle Condition. This is the crucial turning point: we will see that our Bellman function is actually the \textit{smallest} function satisfying these two properties. Known as the ``Least Supersolution'' property of Bellman functions, it translates the original harmonic analysis problem into an optimization problem: find the smallest function with these properties.

 In Section \ref{S:Construct}, we construct a function $\bfg$  which minimizes the family of supersolutions, making it a \textit{candidate} for the Bellman function. See \eqref{eq:COMPLETE CANDIDATE} for the full formula for our candidate. Finally, we prove in Section \ref{sec:true candidate} that our candidate itself is a supersolution, completing the proof that $\bfg$ is indeed the exact Bellman function of the problem.


\section{Background and Notations}
\label{S:2}
\subsection{Dyadic Intervals}

\begin{wrapfigure}{r}{0.5\textwidth} 
  \vspace{-\baselineskip}              
  \centering
  \includegraphics[width=\linewidth]{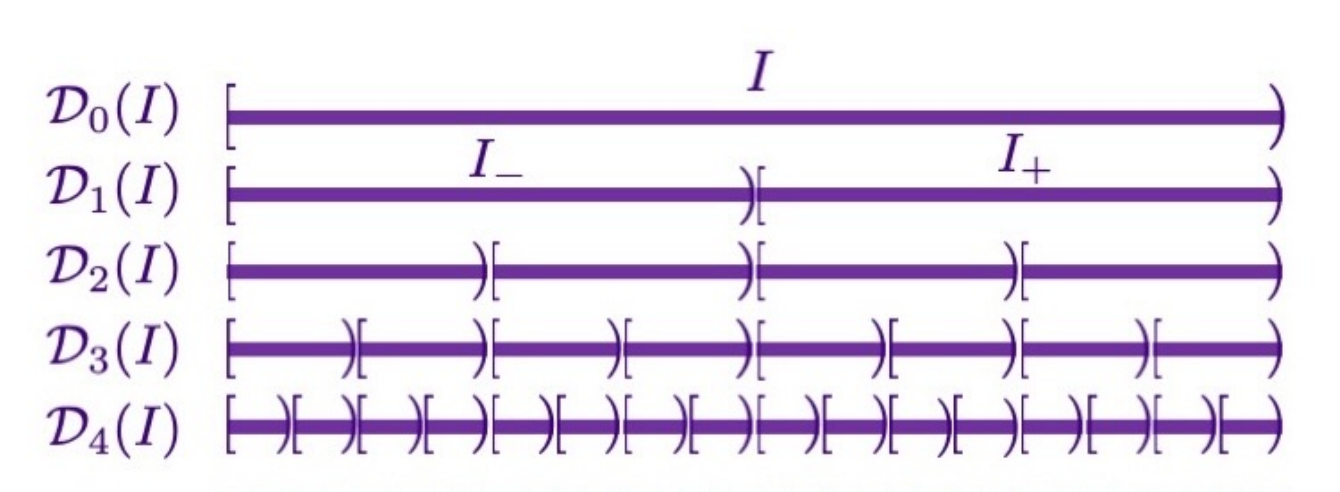}
  \caption{The first five dyadic generations in $\mcd(I)$.}
  \label{fig:DofI}
\end{wrapfigure}

Let $I=[a,b)$ be a real interval, and let $I_-$ and $I_+$ denote the left and right halves of $I$, respectively:
    $$
    I_-=\big[a, \: \tfrac{a+b}{2}\big); \:\:\: I_+=\big[\tfrac{a+b}{2}, \: b\big).
    $$
The dyadic grid adapted to $I$, denoted $\mcd(I)$, is the collection of  subintervals of $I$ defined recursively as follows (see Figure \ref{fig:DofI}):
    \begin{itemize}
        \item $\mcd_0(I) := \{I\}$ $\:\:$ (dyadic generation $0$: the main interval $I$);
        \item $\mcd_1(I)=\{I_-, \: I_+\}$ $\:\:$ (dyadic generation $1$: the two \textit{dyadic children} of the main interval $I$);
        \item Generally, for $k\geq 1$, we define the $k^{\text{th}}$ dyadic generation $\mcd_k(I)$ to be the collection of all dyadic children of intervals in the previous generation $\mcd_{k-1}(I)$, i.e., 
            $
            \mcd_{k}(I) := \{J_-, \: J_+ \: : \: J \in \mcd_{k-1}(I)\}.
            $
    \end{itemize}
Finally, we let
    $$
    \mcd(I) := \bigcup_{k=0}^\infty \mcd_k(I).
    $$

\noindent We note some simple but crucial properties of such collections:
    \begin{itemize}
        \item For every $J\in\mcd(I)$, we have 
            $
            |I|=2^k |J|
            $
        for some integer $k\geq 0$.
        \item Every dyadic generation $\mcd_k(I)$ forms a \textit{partition} of $I$.
        \item If $J, K \in \mcd(I)$, then
            $J \cap K$ is one of $\{\emptyset, J, K\}$.
        In other words, if two dyadic intervals have non-empty intersection, then one must contain the other. This is the quintessential feature of the geometry of dyadic intervals (and dyadic \textit{cubes}, in several dimensions). 
    \end{itemize}

\begin{center}
  {\Large\ding{167}} 
\end{center}

Consider an integrable function $f:I\rightarrow \mbr$. We will associate $f(t)$ with another function, $\mca_\scs f(t)$, also defined on $I$, built from indicators over certain dyadic subintervals $\scs\subset\mcd(I)$, weighted by the corresponding averages of $|f|$:
    \begin{equation}
        \label{e:first}
    f \: \mapsto \: \mca_\scs f(t) := \sum_{K\in\scs} \La |f|\Ra_{K} \unit_K(t); \:\: t\in I.
    \end{equation}

\begin{example} \label{ex:omg}
    Say we take $\scs=\mcd(I)$ above. Since every $t\in I$ is contained in a unique element $I_k(t) \in \mcd_k(I)$ of each dyadic generation, we may express the sum in \eqref{e:first} as:
        $
        \sum_{k=0}^\infty \La |f|\Ra_{I_k(t)}.
        $
    Lebesgue's Differentiation Theorem tells us that, for almost all $t\in I$, $\lim_{k\rightarrow\infty} \La |f|\Ra_{I_k(t)} = |f(t)|$. So, the sum in \eqref{e:first} blows up to $\infty$ for almost all $t\in I$ such that $f(t)\neq0$.
\end{example}

\begin{example} \label{ex:disjoint}
    At the other extreme, assume now that $\scs$ is a \textit{disjoint collection}, i.e. the intervals $K\in\scs$ are \textit{pairwise disjoint} dyadic subintervals of $I$. In this case, any $t\in I$ is contained in \textit{at most one} element of $\scs$, so $\mca_\scs f(t)$ is certainly finite everywhere.
\end{example}

\begin{example} \label{ex:leftmost}
    Suppose $I=[0,1)$ and we consider the collection $\scs$ made up of the leftmost interval in each generation $\mcd_k[0,1)$:
        $
        \scs = \{ [0, 2^{-k}) \: :\: k \in \mbz_{\geq 0}\}.
        $
    Then, note that there is exactly one point in $I$ which is contained in infinitely many elements of $\scs$, namely $\{0\}$.
    So, in this case, the sum on the right-hand side of \eqref{e:first} is finite \textit{almost everywhere}.
\end{example}

The main factor determining the outcome of all three examples above is the same: the amount of overlap between the elements of $\scs$. Example \ref{ex:leftmost} suggests there is a ``reasonable'' middle-ground between the ideal situation of disjointness, and the untractable overlap in Example \ref{ex:omg}. We describe this precisely next.


\subsection{Carleson Sequences} 
    Let $I=[a,b)$ be a real interval equipped with its dyadic grid $\mcd(I)$. 
    We work with binary sequences $\alpha$ ``adapted'' to $I$, i.e. indexed by the dyadic subintervals of $I$:
$$\alpha = \{\alpha_J\}_{J\in\mcd(I)}, \:\: \alpha_J \in \{0,1\}, \text{ for all } J \in \mcd(I).$$
    We can also think of $\alpha$ as a selection procedure: define
    $$
    \scs_\alpha := \{K\in \mcd(I): \: \alpha_K = 1\},
    $$
the collection of dyadic subintervals of $I$ ``selected'' by $\alpha$.

For every $J\in\mcd(I)$, define the quantity:
    $$
    A(\alpha; \: J) := \frac{1}{|J|}\sum_{K\in\mcd(J)}\alpha_K |K| = \frac{1}{|J|}
    \sum_{\substack{K\in\scs_\alpha:\: K \subseteq J}} |K|.
    $$
This is an averaging procedure for $\alpha$ over each interval $J$, relating the total size of selected intervals contained in $J$, to the size of $J$. 
Note that, if $J\in\scs_\alpha$, then $A(\alpha; \: J) \geq 1$, with equality if and only if $\alpha_K=0$ for all $K\subsetneq I$ (in other words, if and only if $\alpha$ selects no further intervals below $J$). On the other hand, $A(\alpha; \: J)>1$ means there must be \textit{some} amount of overlap ($\alpha$ must select at least one $K\subsetneq J$). In fact, the further away from $1$ this quantity is, the more weight $\alpha$ must pack within $\mcd(J)$. So $A(\alpha; \: J)$  serves as a natural measure for the amount of overlap within each interval. As we describe next, the key is to have \textit{uniform} control over $A(\alpha; \: J)$, for all $J$. 

\begin{definition}
Let $\alpha$ be a binary sequence adapted to $I$ and a constant $C\geq 1$. We say that $\alpha$ is \textbf{$C$-Carleson} if and only if
    $$
    \|\alpha\|_{\text{Car}} := \sup_{J\in\mcd(I)} A(\alpha; \: J) \leq C.
    $$
Then, $\|\alpha\|_{\text{Car}}$ is called the \textbf{Carleson constant} of the sequence $\alpha$. We also say the associated collection $\scs_\alpha$ of $\alpha$-selected intervals is \textbf{$C$-Carleson}.
Let $\mfs_C(I)$ denote the set of all binary, $C$-Carleson sequences adapted to $I$.
\end{definition}

Amazingly, a $C$-Carleson collection $\scs\subset\mcd(I)$ has an equivalent formulation which looks quite different from the Carleson condition: $\scs$ is $C$-Carleson if and only if $\scs$ is ``$\sfrac{1}{C}$-sparse.'' We say a collection is \textbf{$\eta$-sparse} if we can associate every $K\in\scs$ with a measurable subset $E_K\subset K$, such that $|E_K|\geq\eta|K|$, and the sets $\{E_K\}_{K\in\scs}$ are \textit{pairwise disjoint}. This formulation is often used to prove boundedness of sparse operators, and reinforces likening sparseness to the next best thing to disjointness. We refer the reader to \cite{LNBook} for a proof.


\begin{center}
  {\Large\ding{167}} 
\end{center}

 Remark that $\|\alpha\|_{\text{Car}}\geq 1$ for all $\alpha \not\equiv 0$, and
    $
    \|\alpha\|_{\text{Car}} = 1 \text{ if and only if } \scs_\alpha
    \text{ is a disjoint collection}.
    $
Moreover, it is enough to check the Carleson condition only on $\alpha$-selected intervals $K\in\scs_\alpha$, that is,
    \begin{equation}
        \label{e:enough}
        \|\alpha\|_{\text{Car}} = c :=  \sup_{K\in\scs_\alpha} A(\alpha; \: K).
    \end{equation}
This is a standard fact, but we include the proof here because it gives us a chance to illustrate a fundamental idea. 
Define for every $J\in\mcd(I)$ the collection of ``$\alpha$-children:''
    \begin{equation*}
    \texttt{ch}_{\alpha}(J) : = \{\text{maximal intervals } K \in \scs_\alpha \text{ such that } K \subsetneq J\},
    \end{equation*}
where \textit{maximality} is with respect to set inclusion; specifically, $K\in\scs_\alpha$ is selected  for $\texttt{ch}_{\alpha}(J)$ if $K\subsetneq J$ \textit{and} $\alpha_L=0$ for any $L\in\mcd(I)$ with $K\subsetneq L\subsetneq J$.

This sort of construction, selecting maximal dyadic intervals which satisfy some property $(P)$, is the key step in the Calder\'{o}n-Zygmund decomposition of functions, in Whitney-type decompositions, covering lemmas, martingale decompositions (see \cite{Grafakos}), and in many ``sparse domination'' arguments (more on this later). It is such a powerful tool across so many types of arguments in large part because \textit{any subcollection built this way will be a disjoint collection} (if non-empty). Consider for instance the collection $\texttt{ch}_\alpha(J)$ defined above. Assuming it is non-empty, let $K, L \in \texttt{ch}_{\alpha}(J)$ with $K\cap L \neq \emptyset$. If $K\neq L$, then one is strictly contained in the other, say $K \subsetneq L$. But this \textit{contradicts the maximality of $K$ in $\texttt{ch}_{\alpha}(J)$}: the dyadic interval $L$ also satisfies property $(P)$, and yet it strictly contains $K$, meaning $K$ cannot be maximal.

Returning to the proof of \eqref{e:enough}, it is clear that $c \leq \|\alpha\|_{\text{Car}}$. To see the converse, let $J\in\mcd(I)\setminus \scs_\alpha$, or $\alpha_J=0$. Then
    $$
    A(\alpha; \: J) = \frac{1}{|J|} \sum_{K\in \texttt{ch}_{\alpha}(J)} \: \sum_{L\in\mcd(K)} \alpha_L |L| 
    \leq \frac{1}{|J|} \sum_{K\in \texttt{ch}_{\alpha}(J)} c \cdot |K| \leq c,
    $$
where the last inequality follows exactly because $\texttt{ch}_{\alpha}(J)$ is a pairwise disjoint collection.

\subsection{Sparse Operators}\label{subsec: sparse} 
If $\alpha \in \mfs_C(I)$, the operator discussed in \eqref{e:first} is called a \textbf{sparse operator} associated with $\alpha$:
    $$
    \mca_\alpha f(t) := \sum_{K\in\scs} \La |f|\Ra_{K} \unit_K(t); \:\: t\in I.
    $$
These operators have been the object of intense study in recent years, due to the demonstrated power of so-called ``sparse domination'' arguments. Roughly speaking, suppose $T$ is some difficult operator we wish to show is bounded $T:L^2\rightarrow L^2$, for example. Now suppose further that, for every function $f\in L^2$, we can construct a $C$-Carleson sparse collection $\scs$ such that 
$|Tf(x)| \leq \mca_{\scs}f(x)$ for almost all $x$. If we have an $L^2$-bound on sparse operators arising from $\alpha\in\mfs_C(I)$, the same bound will transfer to $T$. Such domination arguments have already been established for many important classes of singular integral operators. Further fueling the interest is the fact that such ``domination by sparse'' arguments tend to yield \textit{sharp} bounds, suggesting that sparse operators are the perfect ``toy models'' still powerful enough to capture singularities. We refer the reader to \cite{PereyraRev} and the references therein for an excellent survey of these methods. 

\subsection{The height function}

We say $\mca_\alpha$ is \textit{weak-type $(1,1)$ bounded} if and only if there is a constant $c>0$ such that
    $$
    |\{t\in I: \mca_\alpha f(t)\geq \lambda\}| \leq \frac{c}{\lambda} \int_I |f(t)|\,dt, \text{ for all } \lambda>0 \text{ and } f\in L^1(I)
    \text{ with } \|f\|_{L^1}\neq 0.
    $$
Then, the optimal (smallest) such constant $c$ is the norm of the operator $\mca_\alpha$, acting from $L^1(I)$ to the weak Lebesgue space $L^{1,\infty}(I)$. 
This question was recently considered in \cite{HRS1}, for the special case of indicator functions $f=\unit_E$, where $E\subset I$ is a measurable set. The optimal bound in this restricted case was found using the Bellman function method, and served as the inspiration for this undergraduate research project. 

\vspace{0.1in}

In this paper we consider the simpler case where $f \equiv 1$, and we look at maximizing level sets of the function
    \begin{equation*} 
        h_\alpha(t):=  \mca_\alpha \unit (t) = \sum_{J\in\mcd(I)} \alpha_J  \unit_J(t),
    \end{equation*}
also called the \textbf{height function} of the sparse collection $\scs_\alpha$.
Looking more closely, we notice that $h_\alpha(t)$ \textit{counts} the number of elements in $\scs_\alpha$ which contain $t$:
    $$
	 h_\alpha(t) = \#\{K\in\scs_\alpha: t\in K\}.
	$$
The Carleson condition ensures that the set of points $t\in I$ which are contained in infinitely many elements of $\scs_\alpha$ has Lebesgue measure zero (for a proof, see Lemma 2.2 in \cite{IHFValeria}). Therefore, $h_\alpha$ is finite almost everywhere. Moreover, when finite, $h_\alpha$ takes values in the non-negative integers $\mbz_{\geq 0}$.

\subsection{Sparse Generations} 
\label{Ss:SparseGeneration}
Let $\alpha\in \mfs_C(I)$ be a non-zero sequence. Define the collections
    $$
    \mcg_\alpha^0 := \{\text{maximal } K\in\mcd(I) \text{ such that } \alpha_K=1\}, \:\:\: \text{and}
    \:\:\: \mcg_\alpha^m := \bigcup_{K\in\mcg^{m-1}_\alpha} \: \texttt{ch}_{\alpha}(K) \text{ for } m\geq 1.
    $$
Again by maximality, each $\mcg_\alpha^m$ is a \textit{disjoint} collection of intervals in $\mcd(I)$. We denote their union by 
    $$
    S_\alpha^m := \bigcup_{K\in\mcg_\alpha^m}K.
    $$
It is clear from the definition that
    $
    S_\alpha^0 \supseteq S_\alpha^1 \supseteq \ldots \supseteq S_\alpha^m \supseteq \ldots.
    $
    
\begin{example}
    If $\alpha_I=1$, i.e. the main interval $I$ is itself in the collection $\scs_\alpha$, then 
    the first sparse generation contains only one element, $\mcg_0 = \{I\}$, and $S_\alpha^0=I$. Otherwise, if $\alpha_I\neq 0$, then $\mcg_0$ is a disjoint collection of dyadic intervals in $\mcd(I)$ with union $S_\alpha^0 \subseteq I$.
\end{example}


\subsection{Main Question}
\label{Ss:MainQuestion}
Suppose we ask: given $\alpha \in \mfs_C(I)$, how large can the $m^{\text{th}}$ sparse generation $\mcg_\alpha^{m-1}$ be, relative to the main interval $I$? Now, recall that $A(\alpha; \: I)$
measures the total size of $\alpha$-selected intervals, relative to $|I|$. This can be any number in $[0,C]$, so the answer to our question really hinges on ``how much'' we have to work with: for example, if $A(\alpha; \: I)=5$, we should be able to obtain a larger quantity than if $A(\alpha; \: I)=1$.
So the real question is:
    \begin{center}
        How large can $\frac{|S_\alpha^{m-1}|}{|I|}$ be, for $\alpha\in\mfs_C(I)$ with $A(\alpha; \: I)=A$?
    \end{center}
We will answer this by finding the exact ``Bellman function'' of this problem, starting in Section \ref{S:Bellman}. One last detour before that: we re-frame this question in the language of level sets.

\subsection{Level sets}
\label{Ss:LevelSets}
    
For a Carleson sequence $\alpha\in\mfs_C(I)$ and $\lambda\in\mbr$, define:
    $$
    V_\lambda(\alpha) := \frac{1}{|I|}|\{t \in I: h_\alpha(t) \geq \lambda\}|.
    $$
It is easy to see that
    \begin{equation}
        \label{e:levelsets1}
        0 \leq V_\lambda(\alpha) \leq 1, \text{ for all } \alpha \in \mfs_C(I) \text{ and } \lambda \in \mbr,
    \end{equation}
and
    \begin{equation}
        \label{e:levelsets2}
        V_\lambda(\alpha)=1, \text{ for all } \lambda\leq 0.
    \end{equation}

\begin{remark} \label{rem:negativeLambda}
The reason we allow $\lambda<0$ here is a technical one, and will become apparent later. Essentially, $\lambda$ will be one of the variables of our Bellman function, and allowing negative $\lambda$ will simplify the proof of the so-called ``Bellman induction'' (Theorem \ref{Thm:LSP}). More broadly, the entire $\lambda \leq 0$ case will be swept up in the ``Obstacle Condition'' (see Section \ref{ss:OC}).    
\end{remark}

\begin{center}
  {\Large\ding{167}} 
\end{center}

For $\lambda > 0$, note that, for example, the set $\{t\in I: h_\alpha(t)\geq 3.2\}$ is really  $\{t \in I: h_\alpha(t) \geq 4\}$. In general,
    \begin{equation}
        \label{e:V-ceiling}
    V_{\lambda}(\alpha) = V_{\lceil\lambda\rceil}(\alpha),
    \text{ for all } \alpha\in\mfs_C(I) \text{ and } \lambda>0,
    \end{equation}
where $\lceil\cdot\rceil$ denotes the ceiling function (the smallest integer greater than or equal to $\lambda$).
So it is enough to focus on level sets $V_m(\alpha)$ for positive integers $m$. But observe that these return precisely 
\textit{the size of the $\alpha$-sparse generations}, relative to the size of main interval $I$:
    $$
    V_0(\alpha) = 1; \:\:\:\: 
    V_1(\alpha) = \frac{|S_\alpha^0|}{|I|}; \:\:\:\:
    V_2(\alpha) = \frac{|S_\alpha^1|}{|I|}; \:\ldots\:
    V_m(\alpha) = \frac{|S_\alpha^{m-1}|}{|I|}; \ldots
    $$
For $m\in\mbn$, the level set $\{t \in I: h_\alpha(t) \geq m\}$ asks: which $t\in I$ are contained in at least $m$ elements of $\scs_\alpha$? But elements of $\scs_\alpha$ are \textit{dyadic intervals}, so for example if $t$ is contained in two distinct elements $K_1, K_2$ of $\scs_\alpha$, then either $K_1\subsetneq K_2$ or $K_2\subsetneq K_1$ must hold. In other words, $h_\alpha(t) \geq m$ if and only if there are $m$ elements $K_0,K_1,\ldots K_{m-1}\in\scs_\alpha$ such that 
    $$
    t \in K_{m-1} \subsetneq \ldots \subsetneq K_1 \subsetneq K_0.
    $$
To be contained in an element of $\scs_\alpha$ automatically means being contained in $S_\alpha^0$ (the ``support'' of $\alpha$ in a sense), so if we want those $t\in I$ contained in \textit{at least one} element of $\scs_\alpha$, this is exactly $S_\alpha^0$. Generally, which $t\in I$ are contained in 
\textit{at least} $m$ elements of $\scs_\alpha$? Precisely those $t\in S_\alpha^{m-1}$.

\section{Bellman Function: Definition and Properties} \label{S:Bellman}

\begin{definition}\label{def:bellman-function}
    Fix $C\geq 1$ and define, for real numbers $A, \lambda\in\mbr$:
	\begin{equation}
	\label{e:Bdef}
	\mbg_C(A,\lambda) := \sup_{\alpha} \:V_\lambda(\alpha),
	\end{equation}
where supremum is over all all $\alpha \in \mfs_C(I)$ with fixed average
$A(\alpha; \: I) = A$. We say any such $\alpha$ is an \textbf{admissible sequence} for $\mbg_C(A,\lambda)$.
\end{definition}

We want to consider only points $(A,\lambda)$ for which there exists an admissible sequence (otherwise, we are taking supremum over the empty set). Clearly, any such point will satisfy $A\in[0,C]$. The following lemma ensures that the converse also holds.

    \begin{proposition}
        Let $C \ge 1$. Then, for every $A \in [0,C]$, there exists a binary $C$-Carleson sequence $\alpha$ such that $A(\alpha; I) = A$.
    \end{proposition}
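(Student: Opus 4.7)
I will construct the sequence $\alpha$ explicitly, splitting into cases by the value of $A$. The case $A = 0$ is handled by the zero sequence $\alpha \equiv 0$, which is trivially $C$-Carleson and satisfies $A(\alpha; I) = 0$. For $A \in (0, 1]$, the plan is to realize $A$ as the normalized total length of a pairwise disjoint sub-collection of $\mcd(I)$. Fix an explicit disjoint family $\{J_k\}_{k \geq 1} \subset \mcd(I)$ with $|J_k| = 2^{-k}|I|$---for instance, by iteratively peeling off right-halves: $J_1$ is the right half of $I$, $J_2$ is the right half of the left half of $I$, and so on. Writing the binary expansion $A = \sum_{k \geq 1} b_k 2^{-k}$ with $b_k \in \{0,1\}$, set $\alpha_{J_k} := b_k$ for each $k$ and $\alpha_J := 0$ for every other $J \in \mcd(I)$. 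Since $\scs_\alpha \subseteq \{J_k\}_{k \geq 1}$ is pairwise disjoint, $\|\alpha\|_{\text{Car}} \leq 1 \leq C$, and a direct computation gives $A(\alpha; I) = \sum_{k \geq 1} b_k 2^{-k} = A$.

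For $A \in (1, C]$, write $A = n + r$ with $n := \lfloor A \rfloor \geq 1$ and $r := A - n \in [0, 1)$, and build $\alpha$ in two layers. First declare $\alpha_J := 1$ for every $J$ in the first $n$ generations $\mcd_0(I) \cup \cdots \cup \mcd_{n-1}(I)$. Then, inside each $K \in \mcd_n(I)$, apply the sub-unit construction above with target average $r$ to specify $\alpha|_{\mcd(K)}$ (taking $\alpha|_{\mcd(K)} \equiv 0$ if $r = 0$). Direct bookkeeping shows $A(\alpha; I) = n + r = A$; more generally, for $J \in \mcd_k(I)$ with $0 \leq k \leq n - 1$, one finds $A(\alpha; J) = (n - k) + r \leq A \leq C$, while for $J \in \mcd_k(I)$ with $k \geq n$ the selection inside $\mcd(K)$ (for the unique $K \in \mcd_n(I)$ containing $J$) is disjoint, giving $A(\alpha; J) \leq 1 \leq C$. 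Hence $\|\alpha\|_{\text{Car}} \leq C$ and $A(\alpha; I) = A$, as required.

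The main (mild) subtlety is hitting the target $A$ on the nose rather than approximately: when $A$ fails to be a dyadic rational, the sub-unit construction genuinely requires a countably infinite disjoint selection, which is why invoking the binary expansion of $A$ is essential. Beyond that, the argument is a routine bookkeeping check from the definitions of $A(\alpha; J)$ and the Carleson constant, and no serious obstacle arises.
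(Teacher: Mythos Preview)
Your proposal is correct and follows essentially the same approach as the paper: the zero sequence for $A=0$, a disjoint selection encoded by the binary expansion of $A$ for the sub-unit case, and a ``roof'' of $\lfloor A\rfloor$ full generations followed by the fractional construction below for $A>1$. If anything, your version is slightly more complete, since you explicitly verify the Carleson bound $A(\alpha;J)\le C$ on every $J$, which the paper leaves implicit.
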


\begin{proof}
We construct admissible sequences according to the following cases.
    \begin{enumerate}[label=(\roman*)]
        \item Suppose $A = 0$. In this case, the \textit{only} admissible sequence is $\alpha \equiv 0$, that is, $\alpha_J = 0, \forall J \in \mcd(I)$.

        \item Now, suppose $A \in (0, 1)$. Then, we obtain a simple $\alpha$ directly from the binary expansion of $A$:
            $$
                A = \sum_{m=1}^\infty \frac{a_m}{2^m} = \frac{a_1}{2} + \frac{a_2}{2^2} + \cdots,
            $$
        where every $a_m \in \{0, 1\}$.
        We can then construct a one-generation sequence $\alpha$. First, let $\alpha_I = 0$, as we cannot have $\alpha_I = 1$ when $A<1$. Next,
        \begin{itemize}
            \item If $a_1 = 1$, we select $I_+$ and proceed to look within $I_-$ for our next interval: if $a_2 = 1$, select $I_{-+}$ and proceed to look within $I_{--}$; otherwise, if $a_2 = 0$, continue to look for the next interval within $I_{-+}$, etc.
            \item Similarly, if $a_1 = 0$, rather than selecting $I_+$, we would have continued to look within $I_+$, and chosen $I_{++}$ if $a_2 = 1$,  and continued to look within $I_{+-}$. If $a_2 = 0$, we would have continued to look within $I_{++}$, etc.
            \end{itemize}
        In this way, we essentially construct an interval of measure $A$, relative to $|I|$ (see Figure \ref{fig: (0,1) admissible sequence}).

        \begin{figure}[ht]
            \centering
            \includegraphics[width=0.45\linewidth]{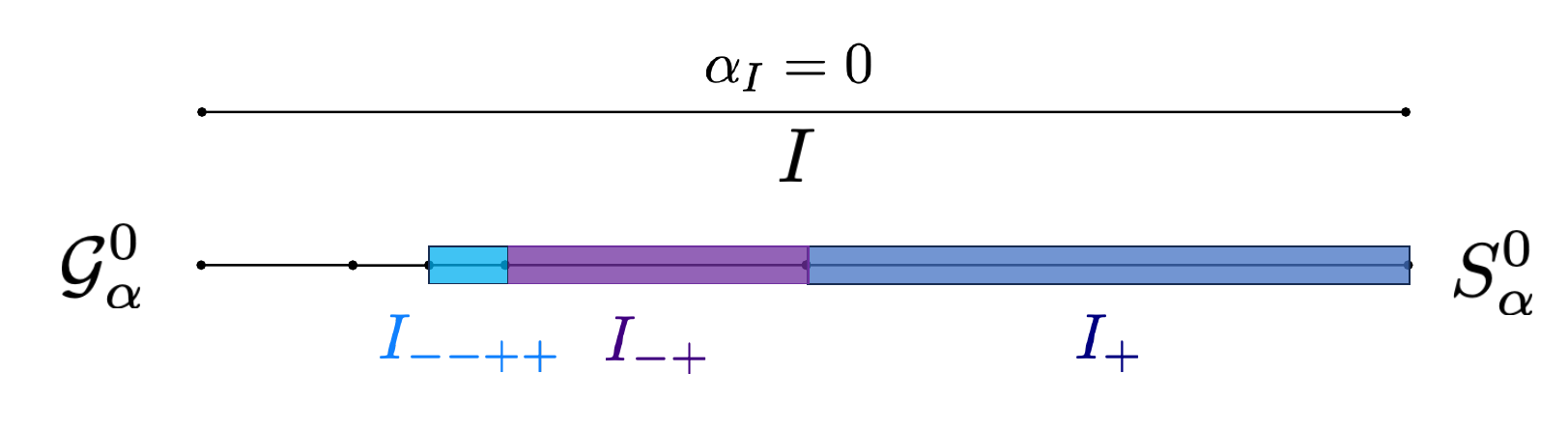}
            \caption{When $A = \sfrac12 + \sfrac14 + \sfrac1{16}$, we first select $I_+$ then continue to look within the left child interval, $I_-$.}
            \label{fig: (0,1) admissible sequence}
        \end{figure}
        
        \item If $A=1$, we can trivially select $\alpha_I = 1$. Alternatively, we can let $\alpha_I = 0$ and let $\alpha_J = 1$ for intervals $J$ that form a partition of $I$. See Figure \ref{fig: nontrivial A=1 sequence}.

        \begin{figure}[ht]
        \centering
        \begin{minipage}[c]{0.45\linewidth}
            \centering
            \includegraphics[width=.75\linewidth]{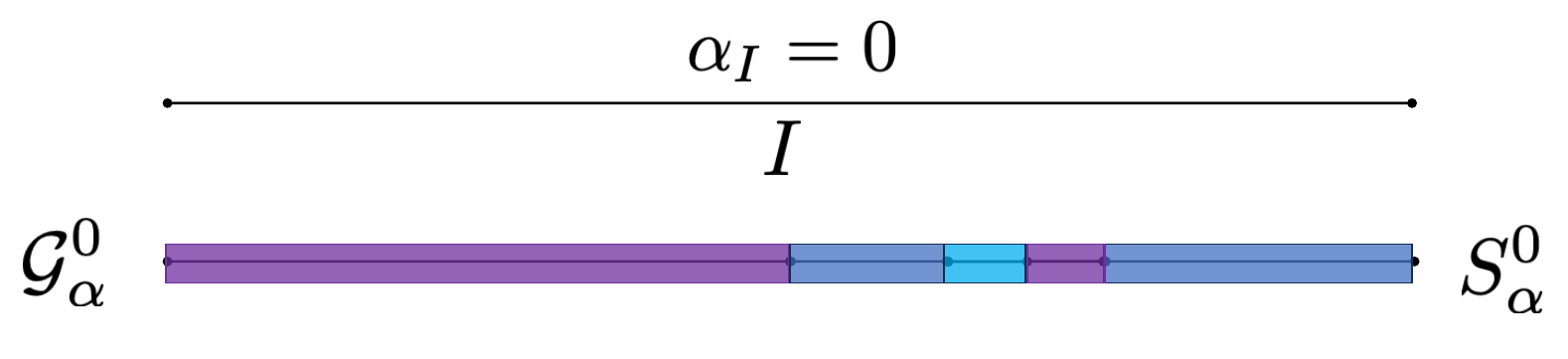}
            \caption{A nontrivial construction for the case $A=1$.}
            \label{fig: nontrivial A=1 sequence}
        \end{minipage}
        \hspace{7pt}
        \begin{minipage}[c]{0.45\linewidth}
            \centering
            \includegraphics[width=.75\linewidth]{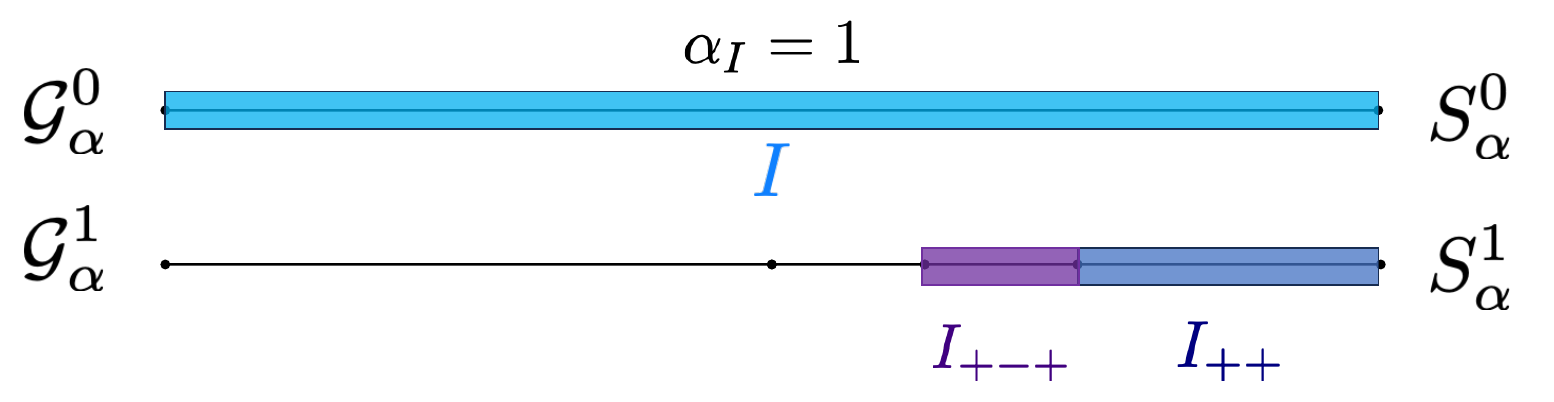}
            \caption{When $A = 1 + \sfrac14 + \sfrac18$, we put a ``roof'' over the construction for the case $A = \sfrac14 + \sfrac18$.}
            \label{fig: (1,2) admissible sequence}
        \end{minipage}
        \end{figure}

        \item If $A > 1$, we write $A = \floor{A} + \{A\}$, where $\{A\}$ denotes the fractional part of $A$ and $\floor{\cdot}$ is the floor function.
        \begin{enumerate}
            \item When $\{ A \} = 0$, we can let $\alpha_J = 1$ for every interval $J$ in generations $\mcd_0(I)$ through $\mcd_{A - 1}(I)$, and $\alpha_J=0$ for all other $J\in\mcd(I)$.

            \item When $\{ A \} \neq 0$, first let $\alpha_J = 1$ for every interval $J$ in generations $\mcd_0(I)$ through $\mcd_{\lfloor A \rfloor - 1}(I)$. Then, for the remaining $\{A\}$, we adapt the admissible sequence from the case $A \in (0,1)$ to the intervals in  generation $\mcd_{\lfloor A \rfloor - 1}(I)$. That is, we select $\floor{A}$ generations of intervals as a ``roof'' for the $A \in (0,1)$ case (See Figure \ref{fig: (1,2) admissible sequence}).
        \end{enumerate}
    \end{enumerate}

\end{proof}

 We can now state the domain and range of our Bellman function:
    $$
    \mbg_C : \Omega_C \rightarrow [0,1],
    $$
where
    $
    \Omega_C:=[0,C]\times\mbr,
    $
and the range comes from \eqref{e:levelsets1}. Also observe that \eqref{e:V-ceiling} translates to
    \begin{equation*}
        \mbg_C(A,\lambda) = \mbg_C(A, \lceil\lambda\rceil).
    \end{equation*}

\subsection{Bellman Function Properties} 
\subsubsection{Independence from the main interval} The careful reader may have remarked that we did not write, for example, $\mbg_C^I$, i.e. we did not keep track of which real interval $I=[a,b)$ is exactly. This is because  $\mbg_C(A,\lambda)$ \textit{is the same no matter which main interval we choose to work within}. 
This is a standard but essential feature of problems amenable to the Bellman approach, and we will see later that it plays a foundational role in obtaining the so-called ``Main Inequality'' of the problem.

This property is especially easy to see in our case because $h_\alpha$ is intrinsic to $\alpha$ alone, so (unlike in \cite{HRS1}, for example) the only objects in our problem are sequences $\alpha \in \mfs_C(I)$, no other functions, weights, etc. Suppose $\alpha \in \mfs_C(I)$ has average $A(\alpha; \: I)=A$, and let $I'=[c,d)$ be any other real interval. The binary tree structure assigning every $J\in \mcd(I)$ an $\alpha_J \in \{0,1\}$ can be ``copy-pasted'' from $I$ to any other interval in the obvious way: to every $J'\in\mcd_k(I')$, assign the same $\beta_{J'} := \alpha_J$ as the corresponding $J\in \mcd_k(I)$. Then $A(\beta; \: I')=A(\alpha; \: I)$ and the sparse generational structure is preserved, so the size of the level set $\{t\in I': h_\beta(t) \geq \lambda\}$ relative to $|I'|$, is the same as the size of the level set $\{t\in I: h_\alpha(t)\geq \lambda\}$ relative to $|I|$. In other words, $\mbg_C(A,\lambda)$ takes supremum over the same set of real numbers regardless of $I$.

\subsubsection{Boundary Values and the Obstacle Condition}
\label{ss:OC}

We can find $\mbg_C$ exactly for certain extreme situations. For example, if $A=0$, then the \textit{only} admissible sequence for $\mbg_C(A,\lambda)$ is the zero sequence $\alpha\equiv 0$. In this case, $h_\alpha\equiv 0$ is the identically zero function, so the level set $\{t\in I: \: h_\alpha(t) \geq \lambda\}$ is all of $I$ if $\lambda\leq 0$, and $\emptyset$ otherwise.
So, 
	\begin{align*}
	    \mbg_C(0, \lambda) = 
        \begin{cases}
    		1, & \text{ if } \lambda\leq 0\\
    		0, & \text{ if } \lambda>0. 
	    \end{cases}
	\end{align*}
In fact, \eqref{e:levelsets2} shows that the first part is always true, for all $A\in[0,C]$:
	\begin{equation*}
	\mbg_C(A,\lambda) = 1, \text{ for all } \lambda \leq 0.
	\end{equation*}
We will call this property the \textbf{obstacle condition}. 

\begin{center}
  {\Large\ding{167}} 
\end{center}

\begin{remark} \label{rem:whenCis1}
    Another extreme situation occurs when the parameter $C$ takes the value $1$. In this case, we can find $\mbg_1$ directly: $C=1$ forces any element of $\scs_\alpha$ to have no $\alpha$-children, so $\scs_\alpha$ must be a collection of \textit{pairwise disjoint} dyadic subintervals of $I$. In other words, the sparse generational structure of $\scs_\alpha$ must be of the form
    $$
    \mcg_\alpha^0 = \{K_n\}_n \:\text{ and }\: \mcg_\alpha^m=\emptyset \text{ for all } m \geq 1,
    $$
where $\{K_n\}_n\subset\mcd(I)$ are (finitely or countably many) disjoint subintervals which satisfy
	$$
	\frac{1}{|I|}\sum_n |K_n| = \frac{|S_\alpha^0|}{|I|} =A.
	$$
Then every $t\in I$ is contained in at most one element of $\scs_\alpha$, and we have
    \begin{equation}
        \label{e:Cis1Bellman}
        \mbg_1(A,\lambda) = \begin{cases}
		1, & \text{ if } \lambda\leq 0\\
		A, & \text{ if } 0<\lambda\leq 1\\
		0, & \text{ if } \lambda>1.
	\end{cases}
    \end{equation}
\end{remark}


\subsubsection{The Main Inequality} Let $A_1, A_2 \in [0,C]$ and $\lambda \in \mbr$. Since $\mbg_C$ is \textit{independent of the choice of main interval}, we are free to think of  $\mbg_C(A_1, \lambda)$ and $\mbg_C(A_2, \lambda)$ as \textit{two separate problems}, which we can each consider on any real interval we wish. So suppose we have two intervals $I_1$ and $I_2$, and we take for each $i\in\{1,2\}$:
    $$
    \alpha_i \in \mfs_C(I_i) \text{ with } A(\alpha_i; \: I_i)=A_i, \text{ an admissible sequence for } \mbg_C(A_i, \lambda).
    $$
By definition of supremum, for every $\epsilon>0$, we can further choose these $\alpha_i$'s in such a way that
    \begin{equation}
        \label{e:almost-sup}
            V_\lambda(\alpha_i) = \frac{1}{|I_i|}|\{t \in I_i: \: h_{\alpha_i}(t) \geq \lambda\}| 
            > \mbg_C(A_i,\lambda)-\epsilon.
    \end{equation}

Now, suppose the two main intervals are actually the left and right halves of some interval $I$:
    $
    I_1 = I_- \text{ and } I_2=I_+.
    $
Then, we would like to concatenate $\alpha_1$ (adapted to $I_1$) and $\alpha_2$ (adapted to $I_2$), and obtain a new sequence $\alpha$, adapted to $I$. All that remains to be determined is $\alpha_I$, and then we can define:
    $$
    \alpha := \begin{cases}
        \alpha_I, & \text{ if } J=I\\
        \alpha_i(J), & \text{ if } J \subseteq I_i.
    \end{cases}
    $$
Then, for all $t\in I$,
    $$
    h_\alpha(t) = \alpha_I + h_{\alpha_1}(t) + h_{\alpha_2}(t).
    $$

Option one is to \textit{not} select the new main interval, i.e. to assign $\alpha_I=0$. In this case,
    $$
    A(\alpha; \: I) = A:= \frac{A_1+A_2}{2} =: \La A_i\Ra \: \in [0,C].
    $$
The resulting $\alpha \in \mfs_C(I)$ is therefore admissible for $\mbg_C(A,\lambda)$, and then
    \begin{align*}
    \mbg_C(A,\lambda)  \geq V_\lambda(\alpha) &=
    \frac{1}{|I|}|\{t\in I: h_{\alpha}(t) \geq \lambda\}|    \\
    & = \frac{1}{2|I_1|}|\{t\in I_1: \: h_{\alpha_1}(t) \geq \lambda\}| +
    \frac{1}{2|I_2|}|\{t\in I_2: \: h_{\alpha_2}(t) \geq \lambda\}|\\
    &> \frac{1}{2} \bigg(\mbg_C(A_1,\lambda) + \mbg_C(A_2,\lambda)\bigg) - \epsilon,
    \end{align*}
where the last inequality follows from \eqref{e:almost-sup}. Since this holds for all $\epsilon>0$, we can take the limit as $\epsilon \rightarrow 0+$ above and obtain:
    \begin{equation*}
        \mbg_C(A,\lambda) \geq \frac{1}{2} \sum_{i=1}^2 \mbg_C(A_i, \lambda).
    \end{equation*}
In other words, $\mbg_C(\cdot, \lambda)$ is midpoint concave for every fixed $\lambda$.


The other option is to assign $\alpha_I=1$. Now, this means
$
A(\alpha; \: I) = 1 + A,
$
where $A=\La A_i\Ra$ as before. To obtain a $C$-Carleson sequence adapted to $I$, we must have $A+1 \leq C$. Assuming this, we obtain $\alpha \in \mfs_C(I)$, admissible for $\mbg_C(A+1, \: \lambda+1)$. Then
    \begin{align*}
        \mbg_C(A+1, \: \lambda+1) \geq V_{\lambda+1}(\alpha) 
        & = \frac{1}{|I|} |\{t \in I: \: 1+ h_{\alpha_1}(t) + h_{\alpha_2}(t) \geq \lambda +1\}|\\
        & = \frac{1}{2} \sum_{i=1}^2 \frac{1}{|I_i|}
        |\{t\in I_i: \: h_{\alpha_i}(t) \geq \lambda\}| \\
        & > \frac{1}{2} \bigg(\mbg_C(A_1,\lambda) + \mbg_C(A_2,\lambda)\bigg) - \epsilon.
    \end{align*}
As before, we obtain
    \begin{equation*}
        \mbg_C(A+1, \: \lambda+1) \geq \frac{1}{2} \sum_{i=1}^2 \mbg_C(A_i, \lambda).
    \end{equation*}

\begin{remark}
    The reader may wonder why we started the Main Inequality with two points $(A_i,\lambda)$, with the same second coordinate, as opposed to a more general approach $(A_i,\lambda_i)$. Could we perhaps be missing out on a ``better'' Main Inequality? This is actually a standard feature of weak-type problems: the Bellman function is non-increasing in $\lambda$. To see this in our case, note that $\lambda_1\leq\lambda_2$ implies $V_{\lambda_2}(\alpha) \leq V_{\lambda_1}(\alpha)$ for any $\alpha$, so $\mbg_C(A,\lambda_2)\leq\mbg_C(A,\lambda_1)$, for all $A$. In light of this, the reader can work out as an exercise that, if we start the Main Inequality with two points $(A_i,\lambda_i)$, we end up with a statement equivalent to \eqref{e:mbg-MI} below.
\end{remark}

\begin{center}
  {\Large\ding{167}} 
\end{center}

We summarize the results in this section in the theorem below.

\begin{theorem} \label{T:BellmanProperties}
    Let $C \geq 1$. The Bellman function $\mbg_C$ defined in \eqref{e:Bdef} has the following properties:
    \begin{enumerate}
        \item \textbf{Independence from the Main Interval:} $\mbg_C(A,\lambda)$ is independent of the choice of main interval $I$.
        
        \item \textbf{Domain and Range:} $\mbg_C:\Omega_{C} \rightarrow [0,1]$, where $\Omega_C:=[0,C]\times\mbr$.

        \item \textbf{Ceiling-Invariance in $\lambda$:} For all $(A,\lambda)$ there holds: 
            $
            \mbg_C(A,\lambda) = \mbg_C(A, \lceil\lambda\rceil).
            $

        \item \textbf{Boundary Values:}
        $\mbg_C(0, \lambda) = 0$ for all $\lambda>0$.

        \item \textbf{Obstacle Condition:}
        $\mbg_C(A,\lambda)=1$, for all $\lambda\leq 0$.

        \item \textbf{Main Inequality:}
            \begin{equation}
		\label{e:mbg-MI}
		\mbg_C\bigg(A + \gamma, \: \lambda + \gamma\bigg) \geq \frac{1}{2}\bigg(\mbg_C(A_1, \lambda) + \mbg_C(A_2, \lambda)\bigg),
		\end{equation}
	for all $0\leq A_1, A_2 \leq C$ with $A :=\La A_i\Ra := \tfrac{A_1+A_2}{2}$ and $\gamma\in\{0,1\}$  such that $\gamma+A\leq C$.
        \item \textbf{Monotonicity in $\lambda$:} 
            $$
            \mbg_C(A,\lambda_1) \geq \mbg_C(A,\lambda_2), \text{ for all } \lambda_1 \leq \lambda_2.
            $$
    \end{enumerate}
\end{theorem}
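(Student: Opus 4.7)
The plan is to assemble the theorem by collecting results already established throughout Section \ref{S:Bellman}, rather than proving anything substantially new; the statement is really a synthesis. I would organize the proof into three blocks: structural properties, boundary/monotonicity properties, and the Main Inequality.

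First I would dispatch items (1)--(3). For independence from the main interval, I would invoke the dyadic ``copy-paste'' bijection described in the text: a sequence $\alpha$ on $\mcd(I)$ transports to a sequence $\beta$ on $\mcd(I')$ with the same Carleson constant, the same average at the top interval, and the same relative level-set measures, so the supremum in \eqref{e:Bdef} is identical for every choice of $I$. The domain $[0,C]\times\mbr$ follows from $A(\alpha;I)\leq\|\alpha\|_{\mathrm{Car}}\leq C$ on the one hand, and the proposition proved just above the theorem (which guarantees the supremum is taken over a nonempty set for every $A\in[0,C]$) on the other. The range $[0,1]$ is \eqref{e:levelsets1}, and ceiling-invariance in $\lambda$ is immediate from \eqref{e:V-ceiling}, which in turn uses that $h_\alpha$ takes values in $\mbz_{\geq 0}$ almost everywhere.

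Next I would handle (4), (5), (7). The boundary value $\mbg_C(0,\lambda)=0$ for $\lambda>0$ holds because $A=0$ forces the unique admissible sequence $\alpha\equiv 0$, for which $h_\alpha\equiv 0$ and the level set at any positive $\lambda$ is empty. The Obstacle Condition is \eqref{e:levelsets2} passed to the supremum. Monotonicity in $\lambda$ follows from the set inclusion $\{h_\alpha\geq\lambda_2\}\subseteq\{h_\alpha\geq\lambda_1\}$ when $\lambda_1\leq\lambda_2$, which is inherited by $V_\lambda(\alpha)$ and then by the supremum defining $\mbg_C(A,\cdot)$.

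The only item requiring a self-contained write-up is the Main Inequality (6), and even here the argument is exactly the concatenation already carried out in the text, now unified over the parameter $\gamma\in\{0,1\}$. Given $A_1,A_2\in[0,C]$ with $A:=\La A_i\Ra$ and $\gamma$ with $\gamma+A\leq C$, I use independence from the main interval to view $\mbg_C(A_i,\lambda)$ as problems on $I_1:=I_-$ and $I_2:=I_+$, and for each $\epsilon>0$ select admissible $\alpha_i\in\mfs_C(I_i)$ satisfying \eqref{e:almost-sup}. Concatenate into $\alpha$ on $\mcd(I)$ by setting $\alpha_I:=\gamma$ and $\alpha_J:=\alpha_i(J)$ for $J\subseteq I_i$. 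The one nontrivial check is admissibility: by \eqref{e:enough}, $\|\alpha\|_{\mathrm{Car}}$ equals the supremum of $A(\alpha;K)$ over $K\in\scs_\alpha$, which is controlled by $\max\bigl(\|\alpha_1\|_{\mathrm{Car}},\|\alpha_2\|_{\mathrm{Car}},\gamma+A\bigr)\leq C$. Since $h_\alpha=\gamma+h_{\alpha_1}+h_{\alpha_2}$, the level set at $\lambda+\gamma$ splits cleanly, giving
\begin{equation*}
\mbg_C(A+\gamma,\lambda+\gamma)\geq V_{\lambda+\gamma}(\alpha)=\tfrac12\bigl(V_\lambda(\alpha_1)+V_\lambda(\alpha_2)\bigr)>\tfrac12\bigl(\mbg_C(A_1,\lambda)+\mbg_C(A_2,\lambda)\bigr)-\epsilon,
\end{equation*}
and sending $\epsilon\to 0$ yields \eqref{e:mbg-MI}. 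The only genuine subtlety anywhere in the proof is thus the admissibility verification for the concatenated $\alpha$, which is precisely where the hypothesis $\gamma+A\leq C$ is used; everything else amounts to organizing pieces already assembled in the section.
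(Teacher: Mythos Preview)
Your proposal is correct and follows essentially the same approach as the paper, which explicitly frames this theorem as a summary (``We summarize the results in this section in the theorem below'') and proves each item in the preceding subsections exactly as you outline. Your treatment of the Main Inequality is in fact slightly more explicit than the paper's on one point: you spell out the admissibility check for the concatenated $\alpha$ via \eqref{e:enough} and the bound $\max(\|\alpha_1\|_{\mathrm{Car}},\|\alpha_2\|_{\mathrm{Car}},\gamma+A)\leq C$, whereas the paper simply asserts $\alpha\in\mfs_C(I)$ without justification.
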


\noindent Two of these properties take the stage going forward: the Obstacle Condition and the Main Inequality. As we see next, $\mbg_C$ is in fact the minimal function with these two properties.

\section{The family of supersolutions}
\label{S:Supersolutions}

\begin{definition}\label{def:supersolution}
    Let $C \geq 1$. We say that a function $G:\Omega_C \rightarrow [0,1]$ is a \textbf{supersolution} if and only if $G$ satisfies the Obstacle Condition:
        \begin{equation}
            \label{e:supsol-OC}
            G(A,\lambda)=1, \text{ for all } \lambda \leq 0,
        \end{equation}
    and the Main Inequality:   
        \begin{equation}
            \label{e:supsol-MI}
           G\big(A + \gamma, \: \lambda + \gamma\big) \geq \frac{1}{2}\bigg(G(A_1, \lambda) + G(A_2, \lambda)\bigg), 
        \end{equation}
    for all $0\leq A_1, A_2 \leq C$ with $A :=\La A_i\Ra := \tfrac{A_1+A_2}{2}$ and $\gamma\in\{0,1\}$  such that $\gamma+A\leq C$.

    \vspace{0.1in}
    Let $\mathscr{G}_C$ denote the collection of all such functions $G$. 
\end{definition}

\begin{theorem}[\textbf{Least Supersolution Property}]
\label{Thm:LSP}
    Let $G\in\scg_C$. Then for all $(A,\lambda) \in \Omega_C$:
        $$
        \mbg_C(A,\lambda) \leq G(A,\lambda).
        $$
\end{theorem}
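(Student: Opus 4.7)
The plan is a ``Bellman induction'' on the dyadic tree. Fix $G \in \scg_C$ and $(A,\lambda) \in \Omega_C$; I aim to show $V_\lambda(\alpha) \leq G(A,\lambda)$ for every admissible $\alpha \in \mfs_C(I)$ and then take the supremum in Definition \ref{def:bellman-function}. The case $\lambda \leq 0$ is immediate from the Obstacle Condition: $V_\lambda(\alpha) \leq 1 = G(A,\lambda)$. So assume $\lambda > 0$.

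For each $J \in \mcd(I)$, set $A_J := A(\alpha;J)$, define the local height $h_\alpha^J(t) := \sum_{K \in \mcd(J),\, t \in K}\alpha_K$ for $t \in J$, and let $V_\lambda^J := \tfrac{1}{|J|}|\{t \in J : h_\alpha^J(t) \geq \lambda\}|$. Splitting $J = J_- \sqcup J_+$ and using that $h_\alpha^J = \alpha_J + h_\alpha^{J_i}$ on $J_i$, one obtains the twin recursions
\[
A_J = \alpha_J + \tfrac{1}{2}(A_{J_-}+A_{J_+}), \qquad V_\lambda^J = \tfrac{1}{2}\bigl(V_{\lambda-\alpha_J}^{J_-}+V_{\lambda-\alpha_J}^{J_+}\bigr).
\]
The Main Inequality \eqref{e:supsol-MI}, applied with $A_1 = A_{J_-}$, $A_2 = A_{J_+}$, $\gamma = \alpha_J$ (the Carleson bound $A_J \leq C$ supplies the admissibility $\gamma + (A_1+A_2)/2 \leq C$), rearranges to
\[
G(A_J,\lambda) \;\geq\; \tfrac{1}{2}\bigl(G(A_{J_-},\lambda-\alpha_J) + G(A_{J_+},\lambda-\alpha_J)\bigr).
\]
This is precisely the pattern required to push $V^J_\lambda \leq G(A_J,\lambda)$ upward through the tree, and it clarifies why $G$ must be defined for all $\lambda \in \mbr$ (cf.\ Remark \ref{rem:negativeLambda}): the shifts $\lambda - \alpha_J$ can slip below zero even when we start from $\lambda > 0$.

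To make the induction rigorous I would truncate at a generation $N$, defining $h_\alpha^{J,N}$ and $V_\lambda^{J,N}$ by the same formulas but restricting the sum to those $K \subseteq J$ of generation at most $N$, and prove $V_\lambda^{J,N} \leq G(A_J,\lambda)$ for every $J$ and every $\lambda \in \mbr$ by downward induction on the generation of $J$, with base case generation$(J) = N$. The inductive step is exactly the recursion plus the Main Inequality above. The main obstacle is the base case, where $V_\lambda^{J,N}$ equals $1$ if $\alpha_J \geq \lambda$ and $0$ otherwise; the only sub-case not settled by $G \geq 0$ or the Obstacle Condition is $\alpha_J = 1$ with $0 < \lambda \leq 1$, where $V_\lambda^{J,N} = 1$ and $A_J \geq 1$. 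I would clear this with a small preliminary lemma: for $A \geq 1$ and $\lambda \leq 1$, applying MI with $A_1 = A_2 = A - 1$, $\gamma = 1$ combined with the Obstacle Condition yields $G(A,\lambda) \geq G(A-1,\lambda-1) = 1$. This extraction of a ``hidden'' $G \equiv 1$ region from OC and MI alone is the only non-mechanical step.

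Finally, let $N \to \infty$. Since $h_\alpha^{I,N}$ is integer-valued and increases monotonically to $h_\alpha$ pointwise, and $h_\alpha$ is finite almost everywhere by the Carleson condition, continuity of measure from below gives $V_\lambda^{I,N} \nearrow V_\lambda(\alpha)$. Passing to the limit in $V_\lambda^{I,N} \leq G(A,\lambda)$ and then taking the supremum over admissible $\alpha$ completes the proof.
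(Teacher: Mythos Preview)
Your argument is correct and follows the same Bellman-induction strategy as the paper: propagate the Main Inequality down the tree with $\gamma=\alpha_J$, and use the Obstacle Condition as a stopping mechanism. The organizational difference is in the truncation. The paper first restricts to \emph{finite} sequences $\alpha$, chooses $N$ large enough that $\alpha_J=0$ on all of $\mcd_N(I)$, and then the terminal estimate is immediate (no preliminary lemma needed); the passage to general $\alpha$ is handled by truncating the \emph{sequence} to $\alpha^{(N)}$, which changes $A$ to some $A_N\to A$ and therefore requires continuity of $G(\cdot,\lambda)$ (available since bounded midpoint-concave functions are continuous). Your route instead truncates the \emph{height function} at depth $N$ while keeping $\alpha$ and all $A_J$ fixed, so the limit $N\to\infty$ is a clean monotone-convergence argument with no appeal to continuity in $A$; the price is the extra base-case lemma $G(A,\lambda)=1$ for $A\geq 1$, $\lambda\leq 1$, which you correctly extract from one application of the Jump Inequality plus the Obstacle Condition. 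Both trade-offs are minor and the proofs are otherwise the same.
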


\begin{proof}
For $\lambda\leq 0$, the result is guaranteed to hold by the Obstacle Condition. So let $(A,\lambda) \in \Omega_C$ with $\lambda>0$, and let $\alpha$ be \textit{any} admissible sequence for $\mbg_C(A,\lambda)$. We will show that
        \begin{equation*}
            V_\lambda(\alpha) \leq G(A,\lambda).
        \end{equation*}
    Then, taking supremum over all admissible $\alpha$, we obtain exactly $\mbg_C(A,\lambda) \leq G(A,\lambda)$.

\vspace{0.1in}

At first sight, it may not be obvious at all how to connect a general class of functions satisfying certain inequalities, with sparse collections. The idea is to ``run the Main Inequality backwards,'' in a process called \textit{Bellman induction}. Supersolution functions, $G$, satisfy the Main Inequality, and we can let $\alpha$ determine the inputs $G$ takes. For every $J\in\mcd(I)$, define:
    $$
    A_J := A(\alpha; \: J); \:\:\:\: \lambda_J := \lambda - \sum_{K\in\mcd(I): K \supsetneq J}\: \alpha_K.
    $$
In particular,
    $$
    A=A_I = \alpha_I + \frac{1}{2}(A_{I_-} + A_{I_+}) \text{ and } \lambda=\lambda_I.
    $$
We let $\alpha_I$ play the role of $\gamma$ in the Main Inequality, and obtain
    $$
    G(A,\lambda) \geq \frac{1}{2} \bigg( G(A_{I_-}, \lambda_{I_-}) + G(A_{I_+}, \lambda_{I_+}) \bigg) = \frac{1}{2} \sum_{J\in\mcd_1(I)} G(A_J, \lambda_J).
    $$
Iterating this procedure, we obtain
    $$
    G(A,\lambda) \geq \frac{1}{2^N} \sum_{J\in\mcd_N(I)} G(A_J, \lambda_J),
    $$
for all $N\in\mbn$.

Suppose first that $\alpha$ is a \textit{finite} sequence, i.e. only finitely many $\alpha_K$'s are $1$. Then there is a dyadic generation $N\in\mbn$ such that $\alpha_J=0$ for all $J\in\mcd_n(I)$ with $n\geq N$, and  $h_\alpha$ is then constant on each terminal interval $J\in\mcd_N(I)$:
    $$
    h_\alpha(t) = \sum_{K\in\mcd(I): K \supsetneq J}\: \alpha_K = \lambda-\lambda_J,
    \text{ for all } t \in J, \: J\in\mcd_N(I).
    $$
We may write then
    $$
    G(A,\lambda) \geq \frac{1}{2^N} \sum_{J\in\mcd_N(I)} G(A_J, \lambda_J)
    \geq \frac{1}{2^N} \sum_{J\in\mcd_N(I): \lambda_J\leq 0} G(A_J, \lambda_J) = 
    \frac{1}{2^N} \cdot \#\{J \in \mcd_N(I): \lambda_J\leq 0\},
    $$
where the Obstacle Condition \eqref{e:supsol-OC} acted as a \textit{stopping condition} to yield the last equality. But now remark that, for $J\in\mcd_N(I)$,
    \begin{align*}
    \lambda_J \leq 0 & \Leftrightarrow \lambda \leq \sum_{K\in\mcd(I): K \supsetneq J}\: \alpha_K =h_\alpha(t), \text{ for all } t \in J\\
        & \Leftrightarrow \lceil\lambda\rceil \leq \sum_{K\in\mcd(I): K \supsetneq J}\: \alpha_K =h_\alpha(t), \text{ for all } t \in J\\
        & \Leftrightarrow J \subset S_\alpha^{\lceil\lambda\rceil-1}
    \end{align*}
where the second equivalence follows because $h_\alpha$ is an \textit{integer} larger than $\lambda$.
Finally,
    $$
    G(A,\lambda) \geq \frac{1}{2^N} \cdot \#\{J \in \mcd_N(I): \lambda_J\leq 0\} =
    \frac{1}{2^N} \cdot \#\left\{J \in \mcd_N(I): J \subset S_\alpha^{\lceil\lambda\rceil-1}\right\} =
    \frac{\left|S_\alpha^{\lceil\lambda\rceil-1}\right|}{|I|}.
    $$
Taking supremum over all admissible $\alpha$'s:
    $$
    G(A, \lambda) \geq \sup_\alpha \frac{\left|S_\alpha^{\lceil\lambda\rceil-1}\right|}{|I|} = \mbg_C(A, \lceil\lambda\rceil) = \mbg_C(A,\lambda).
    $$

Having proved the result for finite sequences $\alpha$, consider now $\alpha\in\mfs_C(I)$ with $A(\alpha; \: I)=A$ (and $\alpha_K=1$ for possibly infinitely many $K$). For every $N\in\mbn$, let $\alpha^{(N)}$ denote the finite sequence obtained by truncating $\alpha$ at dyadic level $N$, i.e.
$$\alpha^{(N)}=\{\alpha^{(N)}_J\}_{J\in\mcd(I)}, \:\:\:
\alpha^{(N)}_J
= \begin{cases}
\alpha_J & \quad \text{if } \frac{|J|}{|I|}>\frac{1}{2^N}, \\
0 & \quad \text{otherwise}.
\end{cases}$$
Then
    $$
    A_N:= A(\alpha^{(N)}; \: I) \text{ satisfies } \lim_{N\rightarrow\infty} A_N=A,
    $$
and
    $$
    h_\alpha(t) = \lim_{N\rightarrow\infty} h_{\alpha^{(N)}}(t) =
    \sup_{N\in\mbn} h_{\alpha^{(N)}}(t).
    $$
Finally,
    $$
    V_\lambda(\alpha) = \lim_{N\rightarrow\infty} \: \frac{1}{|I|}\left|\{t\in I:
    h_{\alpha^{(N)}}(t) \geq \lambda\}\right| 
    \leq \lim_{N\rightarrow\infty} G(A_N, \lambda) = G(A,\lambda),
    $$
where the last equality follows because $G(\cdot, \lambda)$ is continuous.
\end{proof}

\begin{center}
  {\Large\ding{167}} 
\end{center}

\noindent While the Main Inequality in its general form \eqref{e:supsol-MI} was useful for the proof above, it will be easier going forward to observe that it can be reduced to two particular instances.

Let $\gamma=0$, and we obtain for all $(A_i, \lambda) \in \Omega_C$:
    \begin{equation*}
       G(A,\lambda) \geq \frac{1}{2} \sum_{i=1}^2 G(A_i, \lambda), \text{ where } A=\La A_i\Ra.
    \end{equation*}
Since \textit{bounded} midpoint concave functions are continuous (see page 12 in \cite{Donoghue}), we have that $G(\cdot, \lambda)$ is \textbf{concave} for all fixed $\lambda$, for all $G\in\scg_C$. 
If we let $A_1=A_2=A$ and $\gamma=1$ in \eqref{e:supsol-MI}, we obtain the \textbf{Jump Inequality}:
    \begin{equation}
        \label{e:sup-J}
        G(A+1, \lambda+1) \geq G(A,\lambda), \text{ for all } 0\leq A\leq C-1.
    \end{equation}

    \begin{lemma}
        A function $G:\Omega_C\rightarrow [0,1]$ satisfies the Main Inequality \eqref{e:supsol-MI} if and only if $G$ satisfies the following two conditions:
            \begin{enumerate}[label=(\roman*)]
                \item $G(\cdot, \lambda)$ is concave for all $\lambda$;
                \item $G$ satisfies the Jump Inequality \eqref{e:sup-J}. 
            \end{enumerate}
    \end{lemma}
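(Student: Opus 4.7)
The plan is to prove the two implications separately, and observe that the forward direction is essentially a matter of specializing the Main Inequality to two particular parameter choices, while the backward direction reconstructs the full inequality by chaining concavity with the Jump Inequality.

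For the forward direction, assume $G$ satisfies \eqref{e:supsol-MI}. Setting $\gamma=0$ immediately yields
\[
G(A,\lambda) \geq \tfrac{1}{2}\bigl(G(A_1,\lambda)+G(A_2,\lambda)\bigr)
\]
for all admissible $A_1,A_2$, which is midpoint concavity of $G(\cdot,\lambda)$ on $[0,C]$. Since $G$ is bounded (it takes values in $[0,1]$), the classical fact that bounded midpoint concave functions on an interval are concave (cited in the paper as page 12 of \cite{Donoghue}) upgrades this to (i). For (ii), set $A_1=A_2=A$ and $\gamma=1$ in \eqref{e:supsol-MI}; the right-hand side collapses to $G(A,\lambda)$ and we obtain \eqref{e:sup-J}.

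For the backward direction, assume (i) and (ii). Fix $A_1,A_2\in[0,C]$ with $A=\langle A_i\rangle$ and $\gamma\in\{0,1\}$ with $A+\gamma\leq C$. Concavity in the first variable, applied to the midpoint $A=\tfrac12(A_1+A_2)$, gives
\[
G(A,\lambda) \;\geq\; \tfrac{1}{2}\bigl(G(A_1,\lambda)+G(A_2,\lambda)\bigr).
\]
If $\gamma=0$, this is exactly \eqref{e:supsol-MI}. If $\gamma=1$, we note that $A\leq C-1$ (since $A+1\leq C$), so the Jump Inequality applies at the point $(A,\lambda)$ and yields $G(A+1,\lambda+1)\geq G(A,\lambda)$. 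Chaining the two bounds produces
\[
G(A+1,\lambda+1) \;\geq\; G(A,\lambda) \;\geq\; \tfrac{1}{2}\bigl(G(A_1,\lambda)+G(A_2,\lambda)\bigr),
\]
which is \eqref{e:supsol-MI} in the $\gamma=1$ case.

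There is no genuine obstacle here: the lemma is really a structural decomposition of the Main Inequality into its two independent ``modes''  — a purely horizontal concavity statement at fixed $\lambda$, and a purely diagonal monotonicity statement linking $(A,\lambda)$ to $(A+1,\lambda+1)$. The only subtle point worth flagging in the write-up is the invocation of the continuity/concavity upgrade for bounded midpoint concave functions, so that the forward direction produces genuine concavity rather than merely midpoint concavity.
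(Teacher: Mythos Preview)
Your proof is correct and matches the paper's approach essentially line for line: the forward direction specializes $\gamma=0$ (then upgrades midpoint concavity to concavity via boundedness, citing \cite{Donoghue}) and $A_1=A_2=A$, $\gamma=1$; the backward direction chains Jump with concavity exactly as the paper does. The paper in fact only writes out the backward direction explicitly inside the proof environment, having already derived the forward direction in the preceding discussion, but the content is identical to what you have.
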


\begin{proof}
    We only need to show that the two conditions imply the more general \eqref{e:supsol-MI}. Condition \textit{(i)} immediately gives \eqref{e:supsol-MI} for $\gamma=0$. The case $\gamma=1$ follows from an application of both jump and concavity:
        $$
        G(A+1, \lambda+1) \geq G(A,\lambda) \geq \frac{1}{2} \sum_{i=1}^2 G(A_i, \lambda),
        $$
    where the first inequality follows from \textit{(ii)}, and the second from \textit{(i)}.
\end{proof}

\begin{center}
 {\Large\ding{167}} 
\end{center}

\subsection{Transition to a minimization problem} The ``true'' Bellman function $\mbg_C$ is itself contained in the collection $\scg_C$, therefore we can now write
    $$
    \mbg_C(A,\lambda) = \min_{G\in\scg_C} \: G(A,\lambda).
    $$
In other words, finding $\mbg_C$ (which originated as a problem in harmonic analysis), is the same as finding \textit{the smallest function in the collection} $\scg_C$. At this point, we can completely detach from the harmonic analysis motivations, and focus on this minimization problem.

\vspace{0.2in}

In the next Section \ref{S:Construct} we will construct, for every $C \geq 1$, a minimizer function $\bfg_C$ such that $\bfg_C \leq G$, for all $G\in\scg_C$.
But remember that the Bellman function $\mbg_C$ is not just any minimizer, it is \textit{the best minimizer}. So our construction really only gives us that $\bfg_C\leq\mbg_C$, in other words that $\bfg_C$ is a \textit{candidate} for $\mbg_C$, but not necessarily equal to $\mbg_C$. 
In order to have equality, $\bfg_C$ \textit{must itself belong to the collection $\scg_C$}! 

For example, the function
    	$$
	f(A,\lambda) = \begin{cases}
	0, & \text{ if } \lambda>0\\
	1, & \text{ if } \lambda\leq 0,
	\end{cases}
	$$
trivially satisfies $f\leq G$ for all $G\in\scg_C$, so it is a minimizer of the collection $\scg_C$. However, it cannot be \textit{the best} minimizer:  if it were, then $f$ would satisfy the Jump Inequality \eqref{e:sup-J}, which would lead to the contradiction
	$
	0=f(1, 1) \geq f(0,0)=1.
	$
    
This is a silly example of a very real danger: when trying such direct constructions, one always runs the risk that ``we could have done better,'' i.e. the candidate is not large enough. For example, maybe it comes out to be a convex function instead of concave, or maybe it fails the Jump Inequality. Either way, it means that one's use of the available ``moves'' to propagate data was not optimal, and there is some ``better'' way to propagate. In problems with several variables, or problems which take place on strange, non-convex domains, this can get very difficult very fast. 

We will prove in Section \ref{sec:true candidate} that our candidate does indeed belong to the collection $\scg_C$, therefore the converse inequality $\mbg_C\leq\bfg_C$ holds, completing the proof that $\mbg_C=\bfg_C$.

\section{Constructing the Candidate}
\label{S:Construct}
Let  $C \geq 1$. Recall the collection $\scg_C$ consists of functions $G:[0,C]\times\mbr\rightarrow[0,1]$ which satisfy:
    \begin{enumerate}
        \item \textit{Obstacle Condition:} $G(A,\lambda)=1$ for all $\lambda\leq 0$.
        \item \textit{Concavity in the first variable:} $G(\cdot,\lambda)$ is concave for all $\lambda$.
        \item \textit{Jump Inequality:} $G(A+1,\lambda+1) \geq G(A,\lambda)$ for all $0\leq A\leq C-1$.
    \end{enumerate}

Our goal is to construct a function $\bfg_C$, also defined on $\Omega_C=[0,C]\times\mbr$, which will satisfy 
    \begin{equation}
        \label{e:MinimizerCondition}
    \bfg_C(A,\lambda) \leq G(A,\lambda), \text{ for all } 
    (A,\lambda) \in \Omega_C \text{ and  all } G \in \scg_C.
    \end{equation}
    Given that we want to construct $\bfg_C$ to be as large as possible, it makes sense to define
    $$\bfg_C(A,\lambda) := 1, \text{ for all } \lambda \leq 0, $$
    for all our candidates.
 The focus therefore will be on constructing $\bfg_C(A,\lambda)$ for $\lambda>0$: we will do this by propagating data through the domain $\Omega_C$, starting with the initial information provided by the Obstacle Condition, 
and then using our two available ``moves:'' horizontal ($A$-direction) concavity, and the Jump Inequality.

\begin{center}
  {\Large\ding{167}} 
\end{center}

For extra clarity, we will first illustrate the construction for some particular values of $C$, before moving on to the general case.

\subsection{\texorpdfstring{The case $C=1$}{TEXT}} Let us see if we can recover the extremal case  in Remark \ref{rem:whenCis1}, using only the assumptions above (and no harmonic analysis). See Figure \ref{fig:C1-build} for an illustration of the process below.

\begin{figure}[h]
\begin{center}
\includegraphics[width=.9\linewidth]{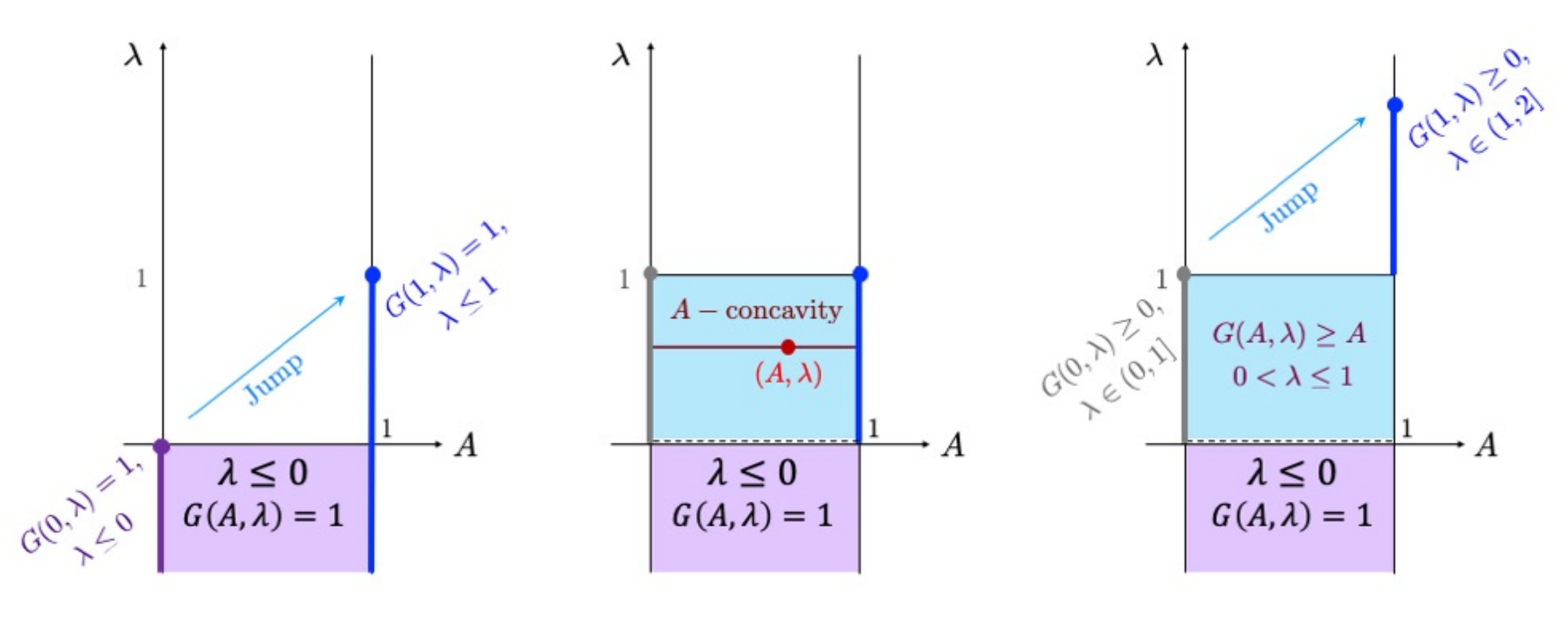}
\end{center}
\caption{Constructing the candidate for $C=1$.}
\label{fig:C1-build}
\end{figure}

Let any $G \in \scg_1$. 
Apply the Jump Inequality to $G(0,\lambda)=1; \: \lambda\leq 0$, and we obtain:
    $
    G(1,\lambda) \geq 1, \text{ for all } \lambda \leq 1.
    $
But since $1$ is the maximum possible value any $G \in \scg_C$ can attain, this is actually an \textit{equality}:
    $$
    G(1,\lambda) = 1, \text{ for all } \lambda \leq 1.
    $$
Now, for each $\lambda \in (0,1]$, we can use $A$-concavity to interpolate between the boundary estimates $G(0,\lambda) \geq 0$ and $G(1, \lambda)=1$: take any $A \in (0,1)$ and write $(A,\lambda)$ as a convex combination
    $$
    (A, \lambda) = (1-\theta) \: (0, \lambda) + \theta \: (1, \lambda),
    $$
for $\theta \in (0,1)$. Then $\theta = A$, and by concavity of $G$ in the first variable:
    $$
    G(A, \lambda) \geq (1-\theta) \: G(0, \lambda) + \theta \: G(1,\lambda) \geq  A.
    $$

Next, we would try to jump the new concavity-data we obtained for $\lambda \in (0,1]$; this step will get more interesting shortly, but for now it only leads us, inductively,  to the trivial estimate:
    $
    G(1,\lambda) \geq  0, \text{ for all } \lambda > 1.
    $

\begin{minipage}[c]{0.4\linewidth}
\begin{center}
\includegraphics[width=.9\linewidth]{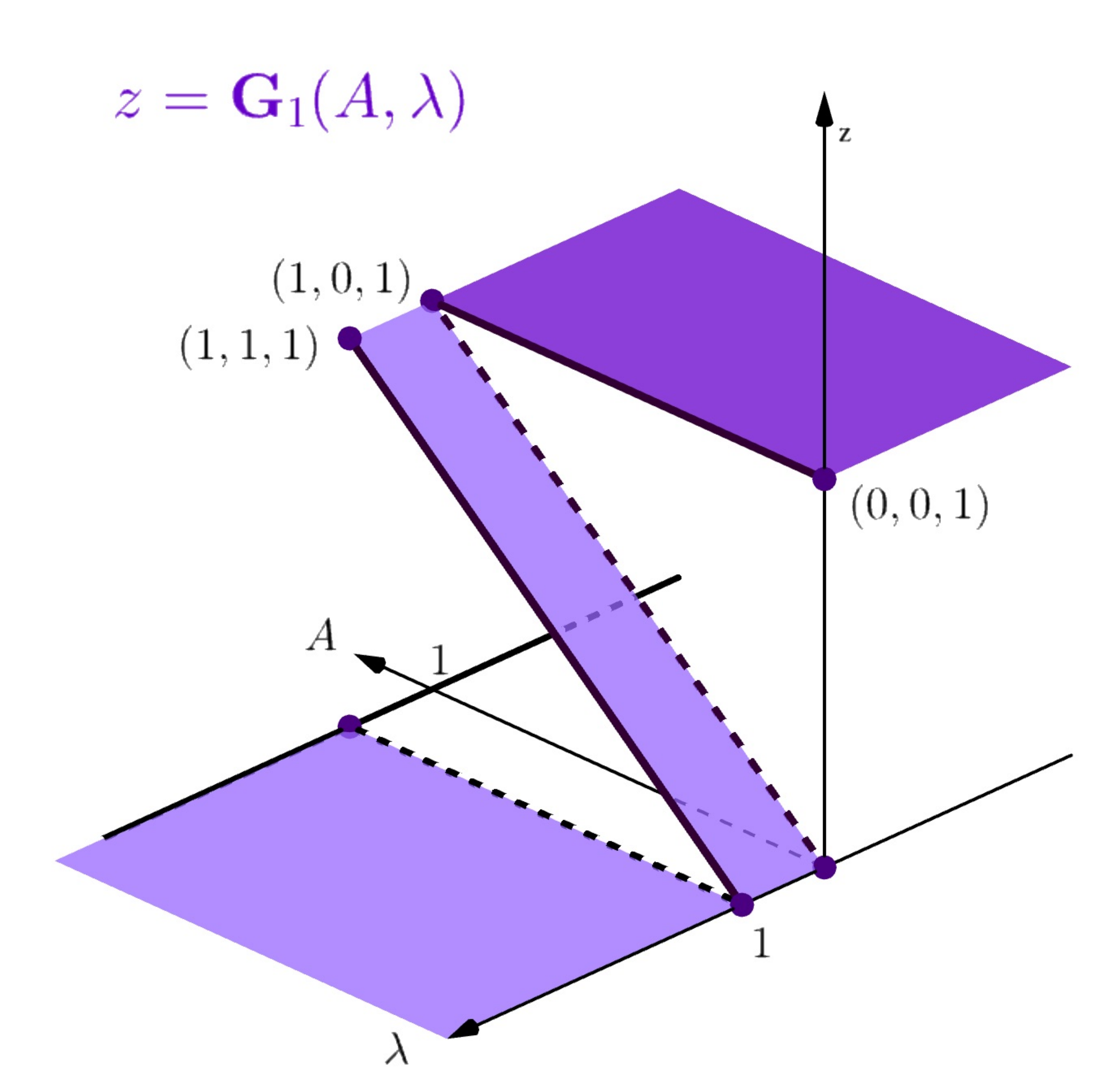}
\end{center}
\end{minipage}
\begin{minipage}[c]{0.6\linewidth}
    Therefore, if we define our candidate as:
    $$
    \bfg_1(A,\lambda) := \begin{cases}
        1, & \text{ if } \lambda\leq 0\\
        A, & \text{ if } \lambda \in (0,1]\\
        0, & \text{ if } \lambda > 1,
    \end{cases}
    $$
we are guaranteed that $\bfg_1$  satisfies \eqref{e:MinimizerCondition}.
    
\vspace{0.1in}

Note, however, that $\bfg_1$ is precisely the function in \eqref{e:Cis1Bellman}, so in this case we secretly already know (from harmonic analysis) that it is the best possible candidate. With caution, we take this as an indication we are on the right track, and using the moves optimally.
\end{minipage}

\begin{remark}
    In this case, we could \textit{only} jump from $A=0$: attempting to use the Jump Inequality on any $A>0$ would take us out of the domain. So, once we are at $\lambda>1$, the only information we can propagate via Jump is the trivial estimate $G(0,\lambda) \geq 0$. 
\end{remark}


\subsection{\texorpdfstring{The case $C=2$}{TEXT}} 
Consider now a function $G\in\scg_2$.  We start, as in the previous case, by ``jumping the obstacle:'' from the Jump Inequality,
    $
    G(1,\lambda) \geq G(0,\lambda-1) = 1 \text{ if } \lambda\leq 1,
    $
so
    $$
    G(1,\lambda)=1 =:\bfg_2(1,\lambda), \text{ for all } \lambda\leq 1.
    $$
But now we can ``jump'' again, this time from $A=1$, where we have just generated new information:
    $
    G(2,\lambda) \geq G(1,\lambda-1)=1 \text{ if } \lambda\leq 2,
    $
so
   $$
    G(2,\lambda)=1 =:\bfg_2(2,\lambda), \text{ for all } \lambda\leq 2.
    $$    

\begin{figure}[h]
    \centering
    \includegraphics[width=\linewidth]{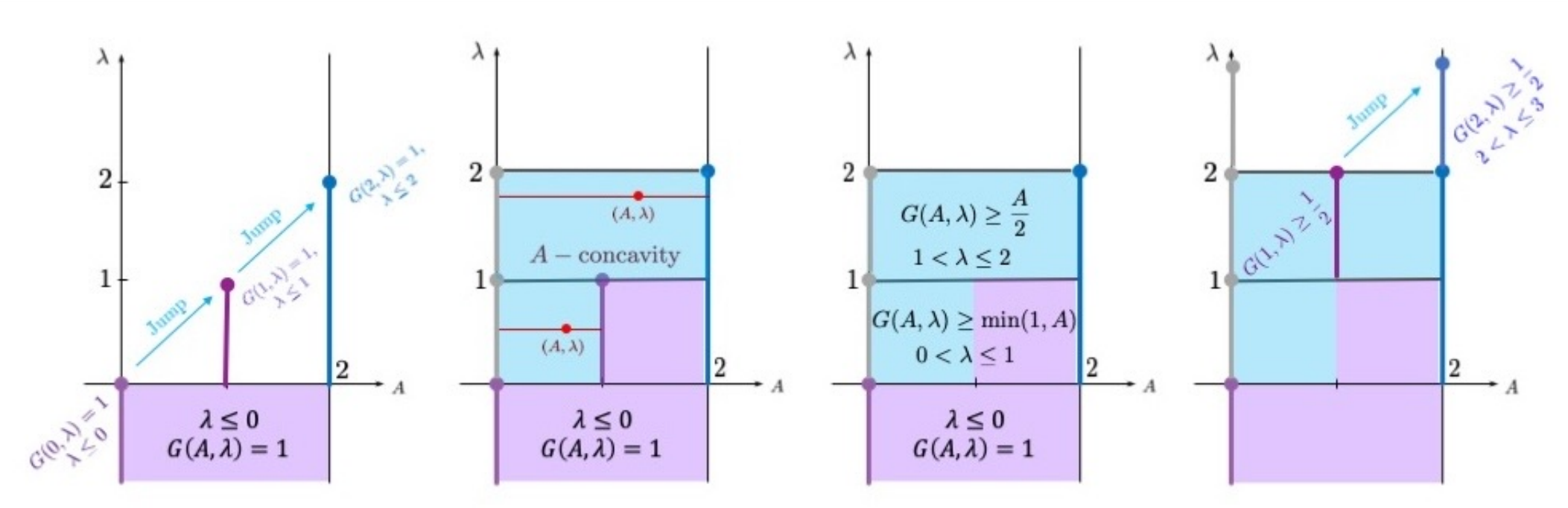}
    \caption{Constructing the candidate for $C=2$.}
    \label{fig:building-C2}
\end{figure}

We have now reached the boundary $A=2$ (see Figure \ref{fig:building-C2}), which means we can no longer jump data. We then use our only remaining move, $A$-concavity, to propagate this new boundary data inwards. Note that we have to wield concavity differently for $\lambda \in (0,1]$ and for $\lambda \in (1,2]$.

If $\lambda\in (0,1]$, then $G$ is maximal at two points: $G(1,\lambda)=G(2,\lambda)=1$. Since $G(\cdot, \lambda)$ is concave, there must then  hold
    $$
    G(A, \lambda)=1=: \bfg_2(A,\lambda), \text{ for all } A\in [1,2], \: \lambda\in (0,1].
    $$
To define $\bfg_2$ for $A\in(0,1)$, we again use concavity of $G$ in the first variable to interpolate between $G(0,\lambda)\geq 0$ and $G(1,\lambda)=1$, and obtain
    $$
    G(A,\lambda) \geq (1-A)\cdot G(0,\lambda) + A \cdot G(1,\lambda) \geq A =: \bfg_2(A,\lambda), 
    \text{ for all } A\in(0,1), \: \lambda\in (0,1].
    $$

For $\lambda \in (1,2]$, we can interpolate directly between $G(0,\lambda)\geq 0$ and $G(2,\lambda)=1$, to obtain via $A$-concavity:
    $$
    G(A,\lambda) \geq \frac{A}{2} =: \bfg_2(A,\lambda), 
    \text{ for all } A\in[0,2] \text{ and } \lambda\in (1,2].
    $$
We now have new data at $A=1$, for $\lambda \in (1,2]$, which we can jump to create new information on the boundary $A=2$:
    $
    G(2,\lambda+1) \geq G(1,\lambda) \geq \frac{1}{2}, \text{ for } \lambda \in (1,2],  
    $
that is
    $$
    G(2,\lambda) \geq \frac{1}{2} =: \bfg_2(2,\lambda), \text{ for all }
    \lambda \in (2,3].
    $$
Now we can again interpolate between the boundaries $A=0$ and $A=2$ for $\lambda \in (2,3]$ and obtain
    $$
    G(A,\lambda) \geq \frac{A}{2} G(2,\lambda) \geq \frac{A}{2^2} =: \bfg_2(A,\lambda),
    \text{ for all } \lambda \in (2,3].
    $$
\begin{wrapfigure}{r}{0.42\textwidth} 
  \vspace{-\baselineskip}              
  \centering
  \includegraphics[width=\linewidth]{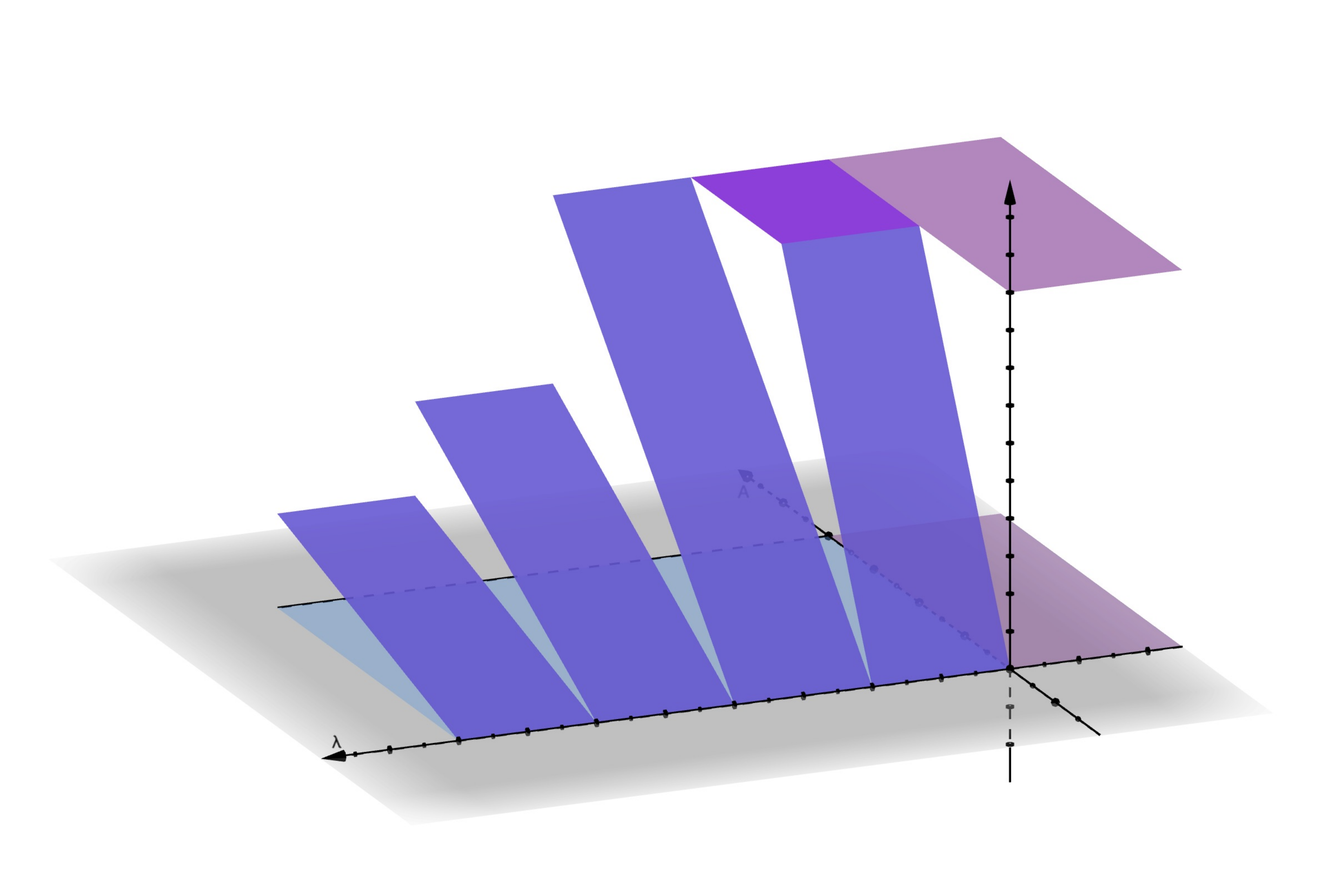}
  \caption{The candidate for $C=2$.}
  \label{fig:C2-built}
\end{wrapfigure}
Note that, in general, for $\lambda>1$, we can essentially reduce the problem to constructing the candidate on the boundary $A=2$. Specifically, let $A\in(0,2)$ and write $A=(1-\theta) \cdot 0 + \theta \cdot 2$. Then $\theta=\sfrac{A}{2}$, and
    $$
    G(A,\lambda) \geq (1-\theta)\cdot G(0,\lambda) + \theta \cdot G(2,\lambda) \geq  
    \frac{A}{2} \cdot G(2,\lambda).
    $$
If we construct $\bfg_2(2,\lambda)$ such that $G(2,\lambda) \geq \bfg_2(2,\lambda)$, and then define
    \begin{equation*}
        \bfg_2(A,\lambda) := \frac{A}{2} \cdot \bfg_2(2,\lambda), \text{ for all }
    A\in(0,2), \: \lambda>1,
    \end{equation*}
then we will have certainly constructed a minimizer of the collection $\scg_2$.
Inductively, we define the candidate along the boundary $A=2$ by
$$
    \bfg_2(2, \lambda) \coloneqq 
    \begin{cases}
        1, & \text{ if } \lambda \le 2, \\
        \frac{1}{2^{\lceil \lambda \rceil -2}}, & \text{ if } \lambda \ge 2,
    \end{cases}
    $$
which leads to the full expression of our candidate for the $C=2$ case (pictured in Figure \ref{fig:C2-built}):
    $$
    \bfg_2(A, \lambda) \coloneqq 
    \begin{cases}
        1, &\text{ if } \lambda \leq 0,\\
        \min(1, A), &\text{ if } 0 < \lambda \leq 1,\\
        \frac{A}{2^{\lceil \lambda \rceil -1}}, &\text{ if } \lambda > 1.
    \end{cases}
    $$

We run through one more example, where this time $C$ is not a natural number. Afterwards, the path to general $C>1$ should be clear.


\subsection{\texorpdfstring{The case $C=3.2$}{TEXT}}
Let $G\in\scg_{3.2}$ be any supersolution. Let us begin, as before, by jumping from $A=0$ (see Figure \ref{fig:concavity for C=3.2}):
$$
G(0,\lambda)=1, \: \lambda \leq 0 
\: \Rightarrow \:
G(1,\lambda) =1, \: \lambda \leq 1
\: \Rightarrow \:
G(2,\lambda)=1, \: \lambda \leq 2
\: \Rightarrow \:
G(3,\lambda)=1, \: \lambda \leq 3.
$$
We cannot jump anymore from $A=3$, but we also have not reached the boundary $A=3.2$! So, let's also jump from $A=0.2$:
 $$
G(0.2,\lambda)=1, \: \lambda \leq 0 
\: \Rightarrow \:
G(1.2,\lambda) =1, \: \lambda \leq 1
\: \Rightarrow \:
G(2.2,\lambda)=1, \: \lambda \leq 2
\: \Rightarrow \:
G(3.2,\lambda)=1, \: \lambda \leq 3.
$$

Having now reached the boundary, we use horizontal concavity to ``fill in'' the remaining parts of the domain for $\lambda\leq 3$.
    \begin{itemize}
        \item When $\lambda \in (0, 1]$, we have $G(1,\lambda)=G(3.2, \lambda)=1$, so $G(A,\lambda)=1$ for all $A \in [1,3.2]$. If $A\in (0,1)$ we obtain as before $G(A,\lambda) \geq A$.
        \item When $\lambda \in (1,2]$, we have maximality at $A=2$ and $A=3.2$, so $G(A,\lambda)=1$ for all $A\in [2,3.2]$. For $A\in (0,2)$, we obtain $G(A,\lambda) \geq \sfrac{A}{2}$.
        \item Finally, for $\lambda \in (2,3]$, we have $G(A,\lambda)=1$ for $A\in [3,3.2]$. If $A\in(0,3)$, we write
        $A = (1-\theta) \cdot 0 + \theta \cdot 3$, so $\theta=\sfrac{A}{3}$, and obtain $G(A,\lambda) \geq \sfrac{A}{3}$.
    \end{itemize}

\begin{figure}
    \centering
    \includegraphics[width=\linewidth]{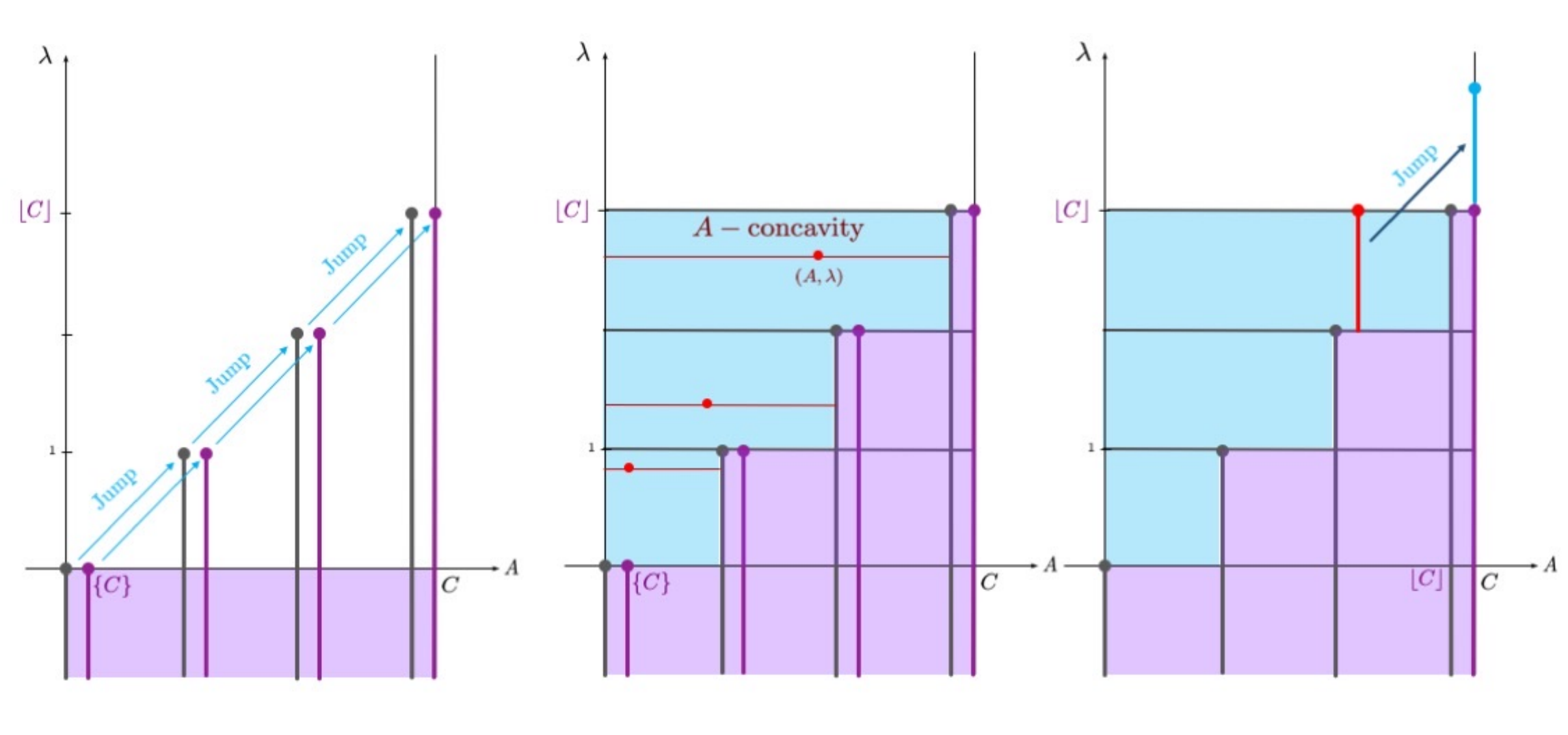}
    \caption{Constructing the candidate for non-integer $C$.}
    \label{fig:concavity for C=3.2}
\end{figure}

We now move to the region $\lambda > 3$. As the reader can anticipate by now, from this point on we can reduce the problem to constructing on the boundary $A=3.2$: if we write $A\in (0,3.2)$ as $A = (1-\theta) \cdot 0 + \theta \cdot 3.2$, we have $\theta=\sfrac{A}{3.2}$ and then
    $$
    G(A,\lambda) \geq \frac{A}{3.2} \cdot G(3.2, \lambda), \text{ for all } \lambda>3.
    $$
Furthermore, from the use of the jump inequality between $G(2.2, \lambda)$ and $G(3.2, \lambda+1)$ we see that we can recursively define our candidate along the boundary:
\begin{equation*}
    \bfg_{3.2} (3.2, \lambda) \coloneqq
    \begin{cases}
        1, &\text{ if } \lambda \le 3\\
        \frac{2.2}{3}, &\text{ if } 3 < \lambda \le 4\\
        \frac{2.2}{3}\left( \frac{2.2}{3.2} \right), &\text{ if } 4 < \lambda \le 5\\
        &\vdots\\
        \frac{2.2}{3}\left( \frac{2.2}{3.2} \right)^{n-1}, &\text{ if } 3 + (n-1) < \lambda \le 3 + n,
    \end{cases}
\end{equation*}
which gives our complete candidate function for the case $C = 3.2$ (pictured in Figure \ref{fig:C3pt2}):
\begin{equation*}
    \bfg_{3.2}(A, \lambda) \coloneqq
    \begin{cases}
        1, &\text{ if } \lambda \le 0\\
        \min\left( 1, A \right), &\text{ if } 0 < \lambda \le 1\\
        \min\left( 1, \frac{A}{2} \right), &\text{ if } 1 < \lambda \le 2\\
        \min\left( 1, \frac{A}{3} \right), &\text{ if } 2 < \lambda \le 3\\
        \frac{A}{3.2} \frac{2.2}{3}, &\text{ if } 3 < \lambda \le 4\\
        \frac{A}{3.2} \frac{2.2}{3} \left( \frac{2.2}{3.2} \right), &\text{ if } 4 < \lambda \le 5\\
        &\vdots\\
        \frac{A}{3.2} \frac{2.2}{3} \left( \frac{2.2}{3.2} \right)^{n-1}, &\text{ if } 3 + (n-1) < \lambda \le 3 + n.
    \end{cases}
\end{equation*}
For the sake of cleanliness, we rewrite this as 
\begin{equation*}
    \bfg_{3.2} (A, \lambda) \coloneqq
    \begin{cases}
        1, &\text{ if } \lambda \le 0\\
         & \\
        \min\left( 1, \frac{A}{\lceil \lambda\rceil} \right), &\text{ if } 0 < \lambda \le 3\\
         & \\
        \frac{A}{3} \left( \frac{2.2}{3.2} \right)^{\ceil{\lambda} - 3}, &\text{ otherwise}.
    \end{cases}
\end{equation*}

\begin{figure}[h]
  \centering
  \begin{subfigure}{0.45\textwidth}
    \centering
    \includegraphics[width=\linewidth]{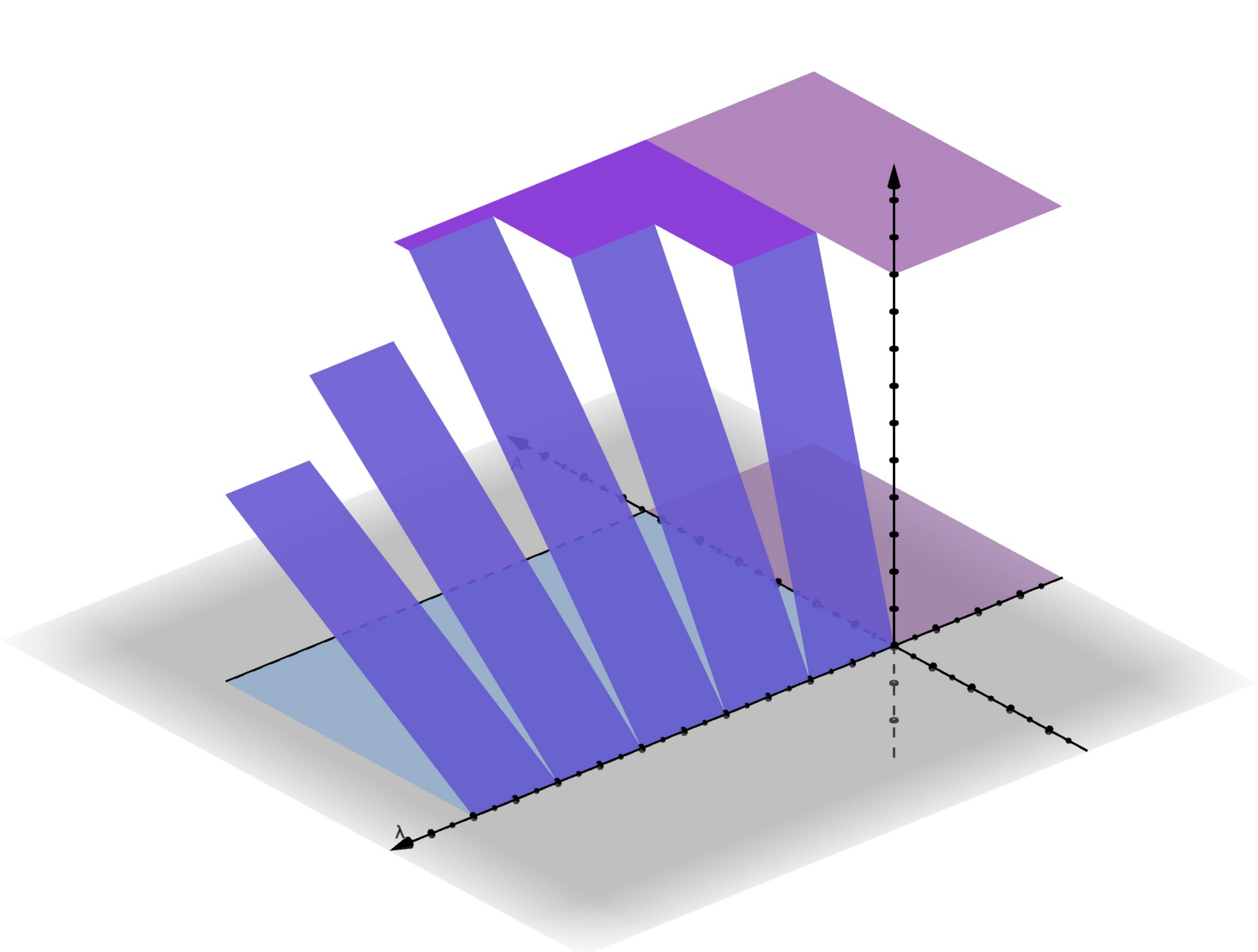}
    \caption{$C=3.2$}
    \label{fig:C3pt2}
  \end{subfigure}\hfill
  \begin{subfigure}{0.55\textwidth}
    \centering
    \includegraphics[width=\linewidth]{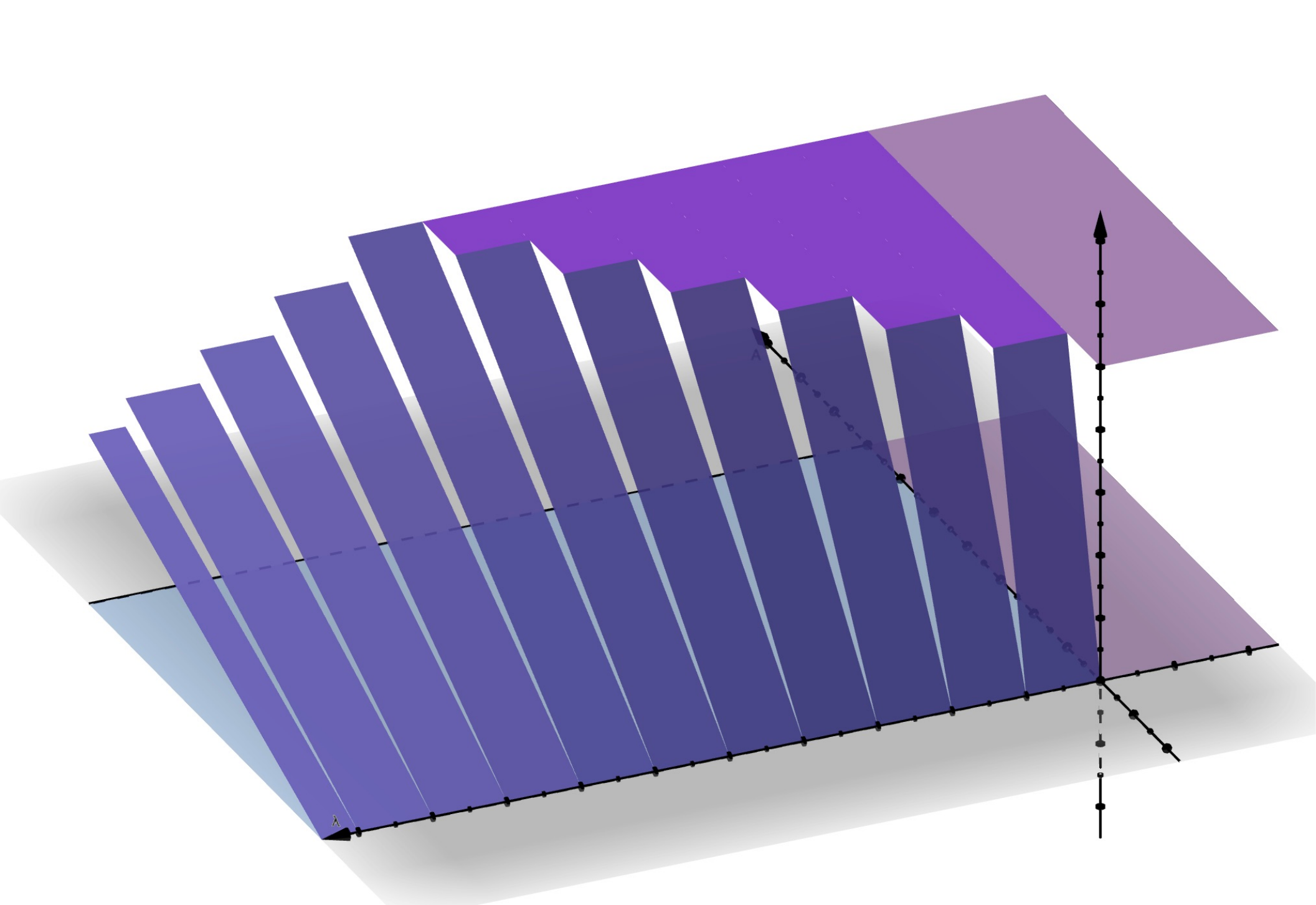}
    \caption{$C=7$}
    \label{fig:C=7 candidate}
  \end{subfigure}
  \caption{The candidate functions $\bfg_{3.2}$ and $\bfg_7$.}
  \label{fig:twoside}
\end{figure}

\begin{remark}
    Observe that for the construction, we only relied on the Obstacle Condition and the Main Inequality (the two properties which made $\mbg_C$ the least supersolution), and did not pre-assume any of the other properties we extracted for $\mbg_C$ in Theorem \ref{T:BellmanProperties}. However, they unfolded naturally out of the construction: for example, $\bfg_C$ is non-increasing in $\lambda$ and satisfies $\bfg_C(A,\lambda)=\bfg_C(A,\lceil\lambda\rceil)$.
\end{remark}

\subsection{\texorpdfstring{Generalizing for $C>1$}{TEXT}}
Suppose first that $C\geq 2$ is a positive integer. In this case, we jump $C$ times from $A=0$ and reach the boundary:
$$
G(0,\lambda)=1, \: \lambda \leq 0 
\Rightarrow
G(1,\lambda)=1, \: \lambda \leq 1
\Rightarrow
\ldots
\Rightarrow
G(C,\lambda)=1, \: \lambda\leq C.
$$
Filling in via $A$-concavity, we obtain
\begin{equation*}
    G(A,\lambda) \geq \bfg_C(A, \lambda) \coloneqq
    \begin{cases}
        1, \quad &\text{ if } \lambda \le 0\\
        \min\left( 1, \frac{A}{\ceil{\lambda}} \right), \quad &\text{ if } 0<\lambda\leq C.
    \end{cases}
\end{equation*}

For $\lambda>C$, we can interpolate directly between the $A=0$ and $A=C$ boundaries and obtain
    \begin{equation}
        \label{e:reducetoboundary}
    G(A,\lambda) \geq \frac{A}{C} G(C,\lambda),
    \end{equation}
leaving us to construct the candidate for $A=C$. We apply jump-concavity recursively, jumping from $C-1$:
    \begin{align*}
    G(C-1,\lambda) \geq \frac{C-1}{C}, \: \lambda\in(C-1,C] 
    & \Rightarrow
    G(C,\lambda) \geq \frac{C-1}{C}, \: \lambda\in(C,C+1]\\
    & \Rightarrow 
    G(C-1,\lambda)\geq \left(\frac{C-1}{C}\right)^2, \: \lambda\in(C,C+1]\\
    & \Rightarrow
    G(C,\lambda) \geq \left(\frac{C-1}{C}\right)^2, \: \lambda\in(C+1,C+2]\\
    & \vdots
    \end{align*}
Inductively, 
\begin{equation*}
    G(A,\lambda) \geq \bfg_C(A, \lambda) \coloneq
    \begin{cases}
        1, \quad &\text{ if } \lambda \le 0\\
        \min\left( 1, \frac{A}{\ceil{\lambda}} \right), \quad &\text{ if } 0 < \lambda \le C\\
        \frac{A}{C} \left( \frac{C-1}{C} \right)^{\ceil{\lambda} - C}, \quad &\text{ otherwise}.
    \end{cases}
\end{equation*}

Finally, suppose $C$ is not necessarily an integer. 
However, note that in both cases, we first define our candidate for $\lambda \le \floor{C}$, and then for $\lambda > \floor{C}$.
The only adjustment we need to make for the case $\lambda \le \floor{C}$ is that we run the series of $C$-many ``jumps from the Obstacle'' twice, once from $A=0$ and again from $A=\{C\}$. For $\lambda > \floor{C}$, we interpolate between the $A=0$ and $A=C$ boundaries, as in \eqref{e:reducetoboundary}, and construct the candidate along the $A=C$ boundary by recursively jumping from $A=C-1$.  
Thus, we reconcile the cases $C=\floor{C}$ and $C \neq \floor{C}$ to arrive at the candidate Bellman function as follows.
\begin{equation}\label{eq:COMPLETE CANDIDATE}
    \bfg_C(A, \lambda) \coloneq
    \begin{cases}
        1, \quad &\text{ if } \lambda \le 0\\
        &\\
        \min\left( 1, \frac{A}{\ceil{\lambda}} \right), \quad &\text{ if } 0 < \lambda \le \floor{C}\\
        &\\
        \frac{A}{\floor{C}} \left( \frac{C-1}{C} \right)^{\ceil{\lambda} - \floor{C}}, \quad &\text{ otherwise}.
    \end{cases}
\end{equation}

\section{\texorpdfstring{Proving the candidate $\bfg_C$ satisfies the Main Inequality}{TEXT}} 
\label{sec:true candidate}
Now that we have a candidate Bellman function, $\bfg_C$, constructed such that $\bfg_C \le G$ for all $G \in \scg_C$, we want to show that $\bfg_C$ belongs to the collection $\scg_C$. This will prove $\bfg_C = \mbg_C$. We assume $C\geq 1$ is fixed from now on, and simply write $\bfg=\bfg_C$ for the remainder of this section.

Since $\bfg$ satisfies the Obstacle Condition by construction, we are only left to show that it satisfies the Main Inequality, that is, the candidate satisfies both midpoint concavity and the jump inequality.

\begin{lemma}\label{lem:proof of midpoint concavity}
    Let $\bfg(A, \lambda)$ be defined as in \eqref{eq:COMPLETE CANDIDATE}. Then
    $$
        \bfg\left( \frac{A_1 + A_2}{2}, \lambda \right) \ge \frac{1}{2}\left( \bfg(A_1, \lambda) + \bfg(A_2, \lambda) \right),
    $$
    for $0 \le A_1, A_2 \le C$ and all $\lambda \in \mbr$.
\end{lemma}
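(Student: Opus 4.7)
The plan is to prove concavity of $\bfg(\cdot,\lambda)$ for each fixed $\lambda$, since full concavity trivially implies midpoint concavity. The key observation is that, by inspection of the piecewise formula \eqref{eq:COMPLETE CANDIDATE}, for every fixed $\lambda$ the function $A\mapsto\bfg(A,\lambda)$ is the minimum (or just a restriction) of affine functions of $A$, and so concavity should fall out case-by-case.

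Concretely, I would fix $\lambda\in\mbr$ and split into three regimes matching the three branches of \eqref{eq:COMPLETE CANDIDATE}. First, if $\lambda\leq 0$, then $\bfg(\cdot,\lambda)\equiv 1$ is constant and the inequality is trivial. Third, if $\lambda>\lfloor C\rfloor$, then $\bfg(\cdot,\lambda)$ is a nonnegative constant times $A$, hence affine in $A$; midpoint concavity holds with equality. The only regime requiring a small argument is the middle one, $0<\lambda\leq\lfloor C\rfloor$, where $\bfg(A,\lambda)=\min\left(1,\tfrac{A}{\lceil\lambda\rceil}\right)$.

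For this middle regime I would set $a:=1/\lceil\lambda\rceil>0$ and $h(A):=\min(1,aA)$, noting $A_1,A_2\in[0,C]$ with $A:=(A_1+A_2)/2$. The verification naturally splits into three subcases depending on how $A_1,A_2$ sit relative to the break-point $\lceil\lambda\rceil$: (i) if both $A_i\leq\lceil\lambda\rceil$, then $h(A_1)+h(A_2)=a(A_1+A_2)=2aA$, while $h(A)\geq aA$ (with equality if $A\leq\lceil\lambda\rceil$, and $h(A)=1\geq aA$ otherwise); (ii) if both $A_i\geq\lceil\lambda\rceil$, then $A\geq\lceil\lambda\rceil$ and both sides equal $1$; (iii) if (WLOG) $A_1\leq\lceil\lambda\rceil\leq A_2$, then the right-hand side is $\tfrac12(aA_1+1)$, and if $aA\geq 1$ the LHS is $1\geq \tfrac12(aA_1+1)$, while if $aA<1$ the LHS is $aA=\tfrac12(aA_1+aA_2)\geq\tfrac12(aA_1+1)$ since $aA_2\geq 1$. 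This handles everything.

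The hardest step, if any, is just keeping the bookkeeping straight in subcase (iii) of the middle regime; nothing here is deep, but one must be careful that every combination of positions of $A_1,A_2,A$ relative to the break-point $\lceil\lambda\rceil$ is covered. Since the ``hard work'' of concavity is really encoded in the construction itself (each piece is affine in $A$, joined at points where the value decreases from a maximum to a line), the lemma is essentially a verification that the construction behaved as intended.
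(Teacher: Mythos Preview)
Your proposal is correct and follows essentially the same route as the paper: both proofs fix $\lambda$, split into the three regimes of \eqref{eq:COMPLETE CANDIDATE}, observe that the first and third regimes are trivial (constant and linear in $A$, respectively), and handle the middle regime $0<\lambda\le\lfloor C\rfloor$ by a subcase analysis on how $A_1,A_2$ sit relative to the break-point $\lceil\lambda\rceil$. Your framing via $h(A)=\min(1,aA)$ is slightly more compact than the paper's explicit subcase bookkeeping, but the content is identical.
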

\begin{proof}
    Let $0 \le A_1, A_2 \le C$. We prove by cases.

\begin{enumerate}
    \item Suppose $\lambda \le 0$. Then
    $$
        \bfg\left( \frac{A_1 + A_2}{2}, \lambda \right) = 1 = \frac{1}{2} (1 + 1) = \frac{1}{2}\left( \bfg(A_1, \lambda) + \bfg(A_2, \lambda) \right).
    $$

    \item Suppose $0 < \lambda \le \floor{C}$.
    \begin{enumerate}
        \item If $\min\left( 1, \frac{A_1}{\ceil{\lambda}} \right) = \min\left(  1, \frac{A_2}{\ceil{\lambda}} \right) = 1$, then $\frac{1}{2}\left( \bfg(A_1, \lambda) + \bfg(A_2, \lambda) \right) = 1$.
    
        Furthermore, $\bfg\left( \frac{A_1 + A_2}{2}, \lambda \right) = \min\left( 1, \frac{1}{2}\left( \frac{A_1 + A_2}{\ceil{\lambda}} \right) \right)$. 
        
        As we have both $\frac{A_1}{\ceil{\lambda}} \ge 1$ and $\frac{A_2}{\ceil{\lambda}} \ge 1$, we arrive at $\frac{A_1 + A_2}{\ceil{\lambda}} \ge 2$ and so $\bfg\left( \frac{A_1 + A_2}{2}, \lambda \right) = 1$.

        \item Without loss of generality, consider if $\min\left( 1, \frac{A_1}{\ceil{\lambda}} \right) = 1$ and $\min\left(  1, \frac{A_2}{\ceil{\lambda}} \right) = \frac{A_2}{\ceil{\lambda}}$.

        Then we have both
        $$
            \frac{1}{2}\left( \bfg(A_1, \lambda) + \bfg(A_2, \lambda) \right) = \frac{1}{2} \left( 1+\frac{A_2}{\ceil{\lambda}} \right) \le 1
        $$
        as $\frac{A_2}{\ceil{\lambda}} \le 1$, and
        $$
            \frac{1}{2}\left( \bfg(A_1, \lambda) + \bfg(A_2, \lambda) \right) = \frac{1}{2} \left( 1+\frac{A_2}{\ceil{\lambda}} \right) \le \frac{1}{2}\left( \frac{A_1 + A_2}{\ceil{\lambda}}\right)
        $$
        as $1 \le \frac{A_1}{\ceil{\lambda}}$.
        
        Thus, $\frac{1}{2}\left( \bfg(A_1, \lambda) + \bfg(A_2, \lambda) \right) \le \min\left( 1, \frac{1}{2}\left( \frac{A_1 + A_2}{\ceil{\lambda}} \right) \right) = \bfg\left( \frac{A_1 + A_2}{2}, \lambda \right)$.

        \item If $\min\left( 1, \frac{A_1}{\ceil{\lambda}} \right) = \frac{A_1}{\ceil{\lambda}}$ and $\min\left(  1, \frac{A_2}{\ceil{\lambda}} \right) = \frac{A_2}{\ceil{\lambda}}$ then
        \begin{align*}
            \frac{1}{2}\left( \bfg(A_1, \lambda) + \bfg(A_2, \lambda) \right) &= \frac{1}{2}\left( \frac{A_1}{\ceil{\lambda}} + \frac{A_2}{\ceil{\lambda}} \right)\\
            &= \min\left( 1, \frac{1}{2}\left( \frac{A_1 + A_2}{\ceil{\lambda}} \right) \right) = \bfg\left( \frac{A_1 + A_2}{2}, \lambda \right).
        \end{align*}
    \end{enumerate}

    \item Finally, suppose $\lambda \ge \floor{C}$. Then
    \begin{align*}
        \frac{1}{2}\left( \bfg(A_1, \lambda) + \bfg(A_2, \lambda) \right) &= \frac{1}{2}\left( \frac{A_1}{\floor{C}}\left( \frac{C-1}{C} \right)^{\ceil{\lambda}-\floor{C}} + \frac{A_2}{\floor{C}}\left( \frac{C-1}{C} \right)^{\ceil{\lambda}-\floor{C}} \right)\\
        &= \frac{A_1 + A_2}{2\floor{C}} \left( \frac{C-1}{C} \right)^{\ceil{\lambda}-\floor{C}}\\
        &= \bfg\left( \frac{A_1 + A_2}{2}, \lambda \right).
    \end{align*}
\end{enumerate}
    Thus, we see that the candidate satisfies midpoint concavity in all cases.
\end{proof}

\begin{lemma}\label{lem:proof of jump inequality}
    Let $\bfg(A, \lambda)$ be defined as in \eqref{eq:COMPLETE CANDIDATE}. Then
    $$
        \bfg\left( A+1, \lambda+1\right) \ge \bfg(A, \lambda),
    $$
    for $0 \le A \le C-1$ and all $\lambda \in \mbr$.
\end{lemma}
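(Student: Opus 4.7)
The plan is to run a case analysis driven by where $\lambda$ lies relative to the two breakpoints, $0$ and $\lfloor C\rfloor$, of the piecewise formula \eqref{eq:COMPLETE CANDIDATE}. The key structural observation is that $\lceil \lambda+1\rceil = \lceil\lambda\rceil + 1$, so shifting the second variable by $+1$ carries us exactly one ``notch'' higher through the three pieces of $\bfg$. This makes it natural to split into five cases: (i) $\lambda \leq -1$, (ii) $-1 < \lambda \leq 0$, (iii) $0 < \lambda \leq \lfloor C\rfloor - 1$, (iv) $\lfloor C\rfloor - 1 < \lambda \leq \lfloor C\rfloor$, and (v) $\lambda > \lfloor C\rfloor$.

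Cases (i) and (ii) are immediate: in (i), both $\lambda$ and $\lambda+1$ fall in the Obstacle region, so both sides equal $1$; in (ii), $\bfg(A,\lambda) = 1$ while $\lceil \lambda+1\rceil = 1$ gives $\bfg(A+1,\lambda+1) = \min(1, A+1) = 1$, since $A+1 \geq 1$. In case (iii) both inputs live in the middle piece; setting $n := \lceil\lambda\rceil$ so that $\lceil\lambda+1\rceil = n+1$, I would further split on whether $A/n \geq 1$ or $A/n < 1$. In the former subcase, $\bfg(A,\lambda) = 1$, and $A + 1 \geq n + 1$ forces $\bfg(A+1,\lambda+1) = 1$ as well. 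In the latter, the inequality $\min(1, (A+1)/(n+1)) \geq A/n$ reduces to the one-line algebraic check $n(A+1) \geq A(n+1)$, i.e.\ $n \geq A$, which holds precisely because $A < n$ in this subcase. Case (v) is analogous: both inputs sit in the ``otherwise'' piece, their $(C-1)/C$-exponents differ by exactly $1$, and the desired inequality collapses to $(A+1)(C-1) \geq AC$, i.e.\ $A \leq C-1$, which is the hypothesis of the lemma.

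The main obstacle is case (iv), the transition from the middle piece to the ``otherwise'' piece. The first step is to observe that the hypothesis $A \leq C-1$ forces $A < \lfloor C\rfloor$, independently of whether $C$ is an integer: if $C \in \mbn$ then $A \leq C-1 < C = \lfloor C\rfloor$, and if $C \notin \mbn$ then $C-1 < \lfloor C\rfloor$ automatically. Consequently the $\min$ on the left simplifies to $\bfg(A,\lambda) = A/\lfloor C\rfloor$, while $\bfg(A+1,\lambda+1) = \frac{A+1}{\lfloor C\rfloor} \cdot \frac{C-1}{C}$ since $\lceil\lambda+1\rceil - \lfloor C\rfloor = 1$. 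The required inequality then reduces, exactly as in case (v), to $(A+1)(C-1) \geq AC$, equivalently $A \leq C-1$. Together with Lemma \ref{lem:proof of midpoint concavity}, this case analysis will confirm that $\bfg \in \scg_C$, and hence, by the Least Supersolution Property (Theorem \ref{Thm:LSP}), that $\bfg = \mbg_C$.
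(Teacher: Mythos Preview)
Your proof is correct and follows essentially the same five-case decomposition as the paper, splitting according to which pieces of the formula \eqref{eq:COMPLETE CANDIDATE} the points $(A,\lambda)$ and $(A+1,\lambda+1)$ land in. Your handling of the transitional case (iv) is in fact slightly cleaner than the paper's: you first reduce the $\min$ to $A/\lfloor C\rfloor$ via the observation $A \leq C-1 < \lfloor C\rfloor$ and then verify $(A+1)(C-1)\geq AC$ directly, whereas the paper carries an extra $C/\lfloor C\rfloor$ factor through the computation.
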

\begin{proof}
    Let $0 \le A \le C-1$. We consider the following cases.

    \begin{enumerate}
        \item Let $\lambda \le 0$ and $\lambda + 1 \le 0$. Then $\bfg(A+1, \lambda+1) = 1 = \bfg(A, \lambda)$.
        
        \item Let $\lambda \le 0$ and $0 < \lambda + 1 \le \floor{C}$. 
        \begin{enumerate}
            \item If $\min\left( 1, \frac{A+1}{\ceil{\lambda+1}} \right) = 1$, then $\bfg(A+1, \lambda+1) = 1 = \bfg(A, \lambda)$. 
            
            \item If $\bfg(A+1, \lambda+1) = \min\left( 1, \frac{A+1}{\ceil{\lambda+1}} \right) = \frac{A+1}{\ceil{\lambda+1}}$ and $\bfg(A, \lambda) = 1$ then
            $$
                \frac{A+1}{\ceil{\lambda+1}} - 1 = \frac{A+1 - \ceil{\lambda+1}}{\ceil{\lambda+1}} = \frac{A - \ceil{\lambda}}{\ceil{\lambda+1}} = A > 0,
            $$ as $\lambda \le 0$ and $0<\lambda+1$ together imply $\ceil{\lambda} = 0$.
        \end{enumerate}
        
        \item Let $0 < \lambda \le \floor{C}$ and $0 < \lambda + 1 \le \floor{C}$. Then $\bfg(A+1, \lambda+1) = \min\left( 1, \frac{A+1}{\ceil{\lambda + 1}} \right)$ and $\bfg(A,\lambda) = \min\left( 1, \frac{A}{\ceil{\lambda}} \right)$.
        \begin{enumerate}
            \item If $\bfg(A+1, \lambda+1) = 1 = \bfg(A, \lambda)$, then the jump inequality holds.
            
            \item If $\bfg(A+1, \lambda+1) = 1 \ge \frac{A}{\ceil{\lambda}} = \bfg(A, \lambda)$, then the inequality holds.
            
            \item If $\bfg(A+1, \lambda+1) = \frac{A+1}{\ceil{\lambda + 1}}$ and $\bfg(A, \lambda) = 1$, then these, respectively, allow us to see that $A+1 \le \ceil{\lambda + 1}$ and $A \ge \ceil{\lambda}$. Together these imply $A = \ceil{\lambda}$. Thus, we can conclude $\frac{A+1}{\ceil{\lambda+1}} = 1$ and the jump inequality holds.
            
            \item If $\bfg( A+1, \lambda+1 ) = \frac{A+1}{\ceil{\lambda + 1}}$ and $\bfg( A, \lambda ) = \frac{A}{\ceil{\lambda}}$ then
            \begin{align*}
                \frac{A+1}{\ceil{\lambda + 1}} - \frac{A}{\ceil{\lambda}}&= \frac{(A+1)\ceil{\lambda} - A\ceil{\lambda+1}}{(\ceil{\lambda+1})(\ceil{\lambda})}\\
                &= \frac{\ceil{\lambda} - A}{(\ceil{\lambda+1})(\ceil{\lambda})} \ge 0,
            \end{align*}
            since  $A \le \ceil{\lambda}$.
        \end{enumerate}
        
        \item Let $0 < \lambda \le \floor{C}$ and $\lambda + 1 > \floor{C}$. 
        Then $\bfg(A+1, \lambda+1) = \frac{A+1}{\floor{C}}\left( \frac{C-1}{C} \right)^{\ceil{\lambda+1} - \floor{C}}$ and $\bfg(A, \lambda) = \min\left( 1, \frac{A}{\ceil{\lambda}} \right) = \frac{A}{C}$.
        We note this conclusion about $\bfg(A, \lambda)$ arises because we have $\lambda \le \floor{C}$ and $\floor{C} < \lambda + 1$, which together imply $\ceil{\lambda} = \floor{C}$. Additionally, $A \le C-1$ so we have the minimum as described.
        This being the case, we show $\frac{A+1}{\floor{C}}\left( \frac{C-1}{C} \right)^{\ceil{\lambda+1} - \floor{C}} \ge \frac{A}{C}$. 
        
        We know $\ceil{\lambda+1} = \floor{C}+1$, so $\bfg(A+1, \lambda+1) = \frac{A+1}{\floor{C}}\left( \frac{C-1}{C} \right)$. Now,
        \begin{align*}
            \frac{A+1}{\floor{C}}\left( \frac{C-1}{C} \right) \frac{C}{A} &= \frac{A+1}{A} \left( \frac{C-1}{C} \right)\frac{C}{\floor{C}}\\
            &= \left( 1 + \frac{C-(A+1)}{AC} \right) \frac{C}{\floor{C}} \ge 1,
        \end{align*}
        as $A+1 \le C$.
        Thus, $\bfg(A+1, \lambda+1) \ge \frac{A}{C} = \bfg(A, \lambda)$.
        
        \item Let $\lambda > \floor{C}$ and $\lambda + 1 > \floor{C}$. Then 
        \begin{align*}
            \frac{\bfg(A+1, \lambda+1)}{\bfg(A,\lambda)} &= \frac{A+1}{\floor{C}} \frac{\floor{C}}{A}\left( \frac{C-1}{C} \right)^{\ceil{\lambda+1} - \floor{C}}\left( \frac{C-1}{C} \right)^{\floor{C} - \ceil{\lambda}}\\
            &= \frac{A+1}{A}\left( \frac{C-1}{C} \right)\\
            &= 1 + \frac{C - (A+1)}{AC} \ge 1.
        \end{align*}
    \end{enumerate}
Therefore, we see that the candidate $\bfg$ as defined in \eqref{eq:COMPLETE CANDIDATE} satisfies the jump inequality.
\end{proof}



\bibliographystyle{abbrv}

\section*{Acknowledgments}
Part of this work was completed during the Spring 2024 semester at Texas A\&M University, as part of the Undergraduate Research Topics course MATH 491. The authors would like to thank Guillermo Rey, Kristina Ana Škreb, and Patricia Alonso Ruiz for many helpful conversations and inspiration. 
I.H.F. is supported by NSF grant NSF-DMS-2246985. 

\vspace{0.2in}

\small
\begin{tabular}{lll}
\textbf{Shivam Aggarwal} & Texas A\&M University & \texttt{shivamaggarwal@tamu.edu} \\
\textbf{Samuel Hernandez} & Texas A\&M University & \texttt{samuelhq11@tamu.edu} \\
\textbf{Irina Holmes Fay} & University of Wyoming & \texttt{iholmesf@uwyo.edu} \\
\textbf{Jennifer Mackenzie} & Texas A\&M University & \texttt{jennifer.mackenzie2@tamu.edu}
\end{tabular}

\end{document}